\documentclass[11pt]{amsart}
\usepackage{a4wide}
\usepackage{amsmath,amssymb,amsfonts,amsthm}
\usepackage{mathtools}
\usepackage{xcolor}
\usepackage{xargs}
\usepackage[normalem]{ulem}
\usepackage{mathrsfs}
\usepackage{esint}
\usepackage[colorlinks,citecolor=blue,urlcolor=blue,bookmarks=false,hypertexnames=true]{hyperref} 
\theoremstyle{plain}
\usepackage[utf8]{inputenc}

\usepackage{graphicx}

\newtheorem{theorem}{Theorem}[section]
\newtheorem{lemma}[theorem]{Lemma}
\newtheorem{proposition}[theorem]{Proposition}

\theoremstyle{definition}
\newtheorem{remark}[theorem]{Remark}
\newtheorem{definition}[theorem]{Definition}

\numberwithin{equation}{section}

\newcommand{\bd}{\partial}
\newcommand{\R}{\mathbb{R}}

\newcommand{\N}{\mathbb{N}}

\newcommand{\T}{\mathbb{T}}

\newcommand{\ud}{\,\textnormal{d}}

\let\div\undefined
\newcommand{\div}{\textnormal{div}}
\newcommand{\udH}{\ud\mathcal H^{N-1}}
\newcommand{\dist}{\textnormal{dist}}
\newcommand{\sd}{\textnormal{sd}}

\title[Stability Modified Mullins-Sekerka]{Dynamical stability for the periodic modified \\Mullins-Sekerka flow}

\author[Daniele De Gennaro]
 {Daniele De Gennaro}
\address[Daniele De Gennaro]{Dipartimento di Matematica, Università degli Studi di Milano, Via Saldini 50, I-20133 Milano, Italy}
\email[D. De Gennaro]{daniele.degennaro@unimi.it}

 \author[Anna Kubin]
 {Anna Kubin}
 \address[Anna Kubin]{Institute of Analysis and Scientific Computing, Technische Universit\"at Wien, Wiedner Haupstrasse 8-10, 1040 Vienna, Austria}
 \email[Anna Kubin]{anna.kubin@asc.tuwien.ac.at}

\date{}
\begin{document}

\begin{abstract}
    We prove dynamical stability in arbitrary dimension for the modified Mullins--Sekerka flow, the gradient flow of the sharp-interface Ohta--Kawasaki energy on the flat torus. Specifically, we show that if  an initial set has the same volume as a strictly stable critical set for the energy and is sufficiently close to it in $C^{3,\alpha}$, then the flow exists for all times and converges exponentially fast, in every $C^k$ norm, to a translate of that critical set. The proof relies on a quantitative Alexandrov-type estimate for strictly stable critical sets of the energy.
\end{abstract}

\maketitle

\section{Introduction}
The Ohta--Kawasaki energy was introduced in~\cite{OhtKaw} to describe
microphase separation in diblock copolymer melts. Given a set $E\subset\mathbb{T}^N$, the sharp-interface Ohta--Kawasaki energy is defined as
\begin{equation}\label{intro:OK-energy}
	\mathcal J(E)
	:=
	P(E)
	+
	\gamma
	\int_{\T^N}\int_{\T^N}
	G(x,y)
	\bigl(u_E(x)-m\bigr)
	\bigl(u_E(y)-m\bigr)
	\ud x\ud y ,
\end{equation}
where $\gamma\geq0$, $u_E:=2\chi_E-1$, $m=\fint_{\T^N}u_E$ and
$G$ denotes the Green function of the Laplacian in $\T^N$.
The two terms in~\eqref{intro:OK-energy} model competing effects: the perimeter
penalizes the creation of interfaces, while the nonlocal term accounts for long-range repulsive interactions. Their competition gives rise to the
formation of nontrivial periodic patterns. 
Critical points and local minimizers of the
Ohta--Kawasaki functional  have been extensively studied. In particular,  explicit  equilibria and their
stability properties were analyzed in~\cite{RenWei1, RenWei2},
the   relation between strict stability and local minimality
was studied in~\cite{ChoSte,AFM}, and periodic locally minimizing critical
points were constructed in~\cite{Cri}. We also refer to~\cite{AlaBroRusTop, ChoPel, GolMurSer}
for results on the structure and distribution of minimizers in
the small-volume and droplet regimes. We emphasize, however, that there is still no characterization of critical points under general assumptions.

A natural evolution associated with~\eqref{intro:OK-energy} is the modified Mullins--Sekerka flow, also known as the nonlocal Mullins--Sekerka flow. More precisely, a smooth family of sets $(E_t)_{t\ge 0}$ is said to evolve by the modified Mullins--Sekerka flow if
\begin{equation}\label{intro:MS}
	\begin{cases}
		V_t=[\partial_{\nu_t}w_t]
		&\text{on }\partial E_t,\\
		\Delta w_t=0
		&\text{in }\T^N\setminus\partial E_t,\\
		w_t=H_{E_t}+4\gamma v_{E_t}
		&\text{on }\partial E_t,\\
		-\Delta v_{E_t}=u_{E_t}-m
		&\text{in }\T^N,
	\end{cases}
\end{equation}
with $v_E$ having zero average.
Here $V_t$ denotes the outer normal velocity of $\partial E_t$ and
$[\partial_{\nu_t}w_t]$ the jump of the normal derivative across the
interface. 
When $\gamma=0$, system~\eqref{intro:MS} reduces to the classical
Mullins--Sekerka flow~\cite{MulSek}.
This flow arises as the sharp-interface  limit of the evolution associated with
the diffuse Ohta--Kawasaki energy. 
The derivation of this flow as a sharp-interface limit of the Ohta–Kawasaki phase-field equation via a singular perturbation argument was first established in~\cite{NisOhn}, and the rigorous convergence of the phase-field model to the sharp-interface evolution   was proved in~\cite{Le}. Short-time
well-posedness for the classical Mullins--Sekerka problem was established in~\cite{XinJiaFah, ESSIM98}, while the corresponding modified problem was treated in~\cite{EscNis}.

Like the classical Mullins--Sekerka flow, system~\eqref{intro:MS} preserves
the volume of the evolving sets and dissipates the energy ${\mathcal J}$. More precisely,
every smooth solution of~\eqref{intro:MS} satisfies
\begin{equation}\label{intro:dissipation}
	\frac{\ud}{\ud t}|E_t|=0,
	\quad
	\frac{\ud}{\ud t}{\mathcal J}(E_t)
	=
	-\int_{\T^N}|\nabla w_t|^2 \ud x .
\end{equation}
Moreover, this system can be interpreted as the gradient flow of
${\mathcal J}$ with respect to a suitable metric on the space of interfaces.  This gradient-flow interpretation has been developed in the smooth setting in~\cite{NieOtt, Le} and, more recently, at the level of weak solutions in ~\cite{HenSti}.

In view of this gradient-flow structure, it is natural to expect  that  static
stability of a critical point should imply  dynamical stability. This question
was addressed in~\cite{AFJM} in dimensions two and three. More precisely, the authors proved that a solution of the modified Mullins--Sekerka flow starting sufficiently close to a strictly stable critical set exists for all time and converges exponentially fast, in a suitable Sobolev norm, to a translate of the reference set. Their argument is variational and is
based on differentiating the dissipation~\eqref{intro:dissipation},
exploiting the positivity of the second variation and controlling the
nonlinear remainder terms. 
We also refer to~\cite{DelDiaMan} for a detailed account of this
result.

The arguments of~\cite{AFJM}, however, rely on 
interpolation estimates depending crucially on the dimension. 
A different approach was introduced in~\cite{DeKu} for the discrete volume-preserving mean curvature flow on the flat torus, extending the ideas developed in~\cite{MorPonSpa} in the Euclidean setting. This approach was subsequently employed in~\cite{DegDiaKubKub} to prove, in arbitrary dimension, the dynamical stability of strictly stable sets for both the volume-preserving mean curvature flow and the surface diffusion flow. The key ingredient in these results is a quantitative Alexandrov-type estimate.

Quantitative versions of Alexandrov's theorem  were first obtained  in~\cite{KM, JulNii, MorPonSpa}. 
These results play an important role in the analysis of geometric evolutions, as they provide the correct estimates to show compactness for curvature-driven gradient flows, including the mean curvature flow, surface diffusion flow and Mullins--Sekerka flow.  
In the context of weak solutions to geometric flows, such  estimates, coupled with a characterization of the critical points of the energy, 
 can be used to prove the asymptotic convergence of weak solutions towards critical points under low regularity  of the initial data. For more recent results regarding Alexandrov-type estimates, we refer to~\cite{JulMorPonSpa, JulMorOroSpa, KimKwon, AryDegKub}.

The purpose of the present paper is to develop this strategy for the
 modified Mullins--Sekerka flow.  
 In
particular, we prove the  stability of the flow in arbitrary dimension, thereby removing the dimensional restriction in~\cite{AFJM}. Moreover, our result  yields exponential convergence to a translate of the reference set in every $C^k$-norm, whereas the result of~\cite{AFJM} establishes convergence only in a suitable Sobolev norm.
Our main result is the following.

\begin{theorem}\label{thm:main}
	Let $E\subset\T^N$ be a strictly stable critical set for ${\mathcal J}$, and let
	$\alpha\in(0,1)$. There exist $\delta>0$ and constants $c>0$,
	$C_k>0$, $k\in\N$, with the following property.
	
	If $E_0\subset\T^N$ is a $C^{3,\alpha}$-set satisfying
	\[
	|E_0|=|E|,
	\qquad
	\dist_{C^{3,\alpha}}(E_0,E)\leq\delta,
	\]
	then the modified Mullins--Sekerka flow starting from $E_0$ admits a unique
	smooth solution $(E_t)_{t\geq0}$ for all positive times. Moreover, there
	exists $\sigma_\infty\in\T^N$ such that 
	\[
	\dist_{C^k}(E_t,E+\sigma_\infty)
	\leq
	C_k e^{-ct},\qquad \text{for all }k\in\N, t\ge 1.
	\]
\end{theorem}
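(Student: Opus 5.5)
The strategy follows the one used in~\cite{DeKu,DegDiaKubKub}. We combine short-time well-posedness and parabolic smoothing with a quantitative Alexandrov-type estimate near $E$. Together these give a continuation argument showing that the flow stays close to translates of $E$ forever, and that it converges exponentially.

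\emph{Local theory.} From the short-time theory of~\cite{EscNis} we take the following facts. There exist $T_0>0$ and $\delta_0>0$ such that every $C^{3,\alpha}$-set $F$ with $\dist_{C^{3,\alpha}}(F,E+\sigma)\le\delta_0$ for some $\sigma$ generates a unique smooth solution on $[0,T_0]$. This solution stays $C^{3,\alpha}$-close to $E+\sigma$, with a bound $\dist_{C^{3,\alpha}}(F_t,E+\sigma)\le C\,\dist_{C^{3,\alpha}}(F,E+\sigma)$. It also satisfies parabolic smoothing: for $t\in[T_0/2,T_0]$ and every $k$, the $C^k$ distance is bounded by $C_k$ times a weaker quantity, for instance $\|w\|$ in a low Sobolev norm or the $L^2$ distance integrated in time. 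We write $\partial F_t$ as a normal graph $h_t$ over $\partial(E+\sigma_t)$, where the translation $\sigma_t$ is chosen so that $h_t$ is $L^2$-orthogonal to the translation modes $\nu_E\cdot e_i$. The volume constraint fixes the average of $h_t$ to second order.

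\emph{Key estimate.} The central step is an Alexandrov-type estimate in the spirit of~\cite{MorPonSpa}, adapted to the nonlocal functional. For $F$ that is $C^{1,\alpha}$-close to $E$, has $|F|=|E|$ and has bounded curvature, we aim to show
\[
\min_\sigma \|h\|_{H^{1}}\ \le\ C\,\|H_F+4\gamma v_F-\overline{\lambda}\|_{H^{1/2}(\partial F)}
\qquad\text{and}\qquad
\mathcal J(F)-\mathcal J(E)\le C\|\nabla w\|_{L^2}^2 ,
\]
where $\overline\lambda$ is the average of $H_F+4\gamma v_F$ on $\partial F$ and $w$ is the harmonic extension from the flow. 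The first inequality comes from linearising the Euler--Lagrange operator: its derivative is the Jacobi operator $L$, and strict stability means $L$ is coercive on $H^1$ functions with zero mean that are orthogonal to translations. The nonlocal term $4\gamma v_F$ is differentiable in $h$, with a remainder that is quadratic in $\|h\|_{C^{1,\alpha}}\|h\|_{H^1}$. The $H^{-1/2}$--$H^{1/2}$ duality is then converted into $\|\nabla w\|_{L^2}$. The second inequality (a Łojasiewicz-type bound) follows from a Taylor expansion of $\mathcal J$ around the nearest translate, using the fact that the first variation vanishes at $E$. This step is the main obstacle. The difficulty is keeping the constants uniform while only $C^{1,\alpha}$ or $C^{2,\alpha}$ closeness is controlled, together with the nonlocal Green-function term; this is where the strict stability gap is used.

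\emph{Continuation and convergence.} Let $T^*$ be the maximal time for which $\dist_{C^{3,\alpha}}(E_t,E+\sigma_t)\le\delta_0$. On $[0,T^*)$, the dissipation identity~\eqref{intro:dissipation} together with the Łojasiewicz-type bound gives
\[
\tfrac{\ud}{\ud t}\big(\mathcal J(E_t)-\mathcal J(E)\big)\le -c\big(\mathcal J(E_t)-\mathcal J(E)\big),
\]
so the energy gap decays like $e^{-ct}$. The translations satisfy $|\dot\sigma_t|\lesssim\|V_t\|\lesssim\|\nabla w_t\|$. Integrating $\|\nabla w\|\le(-\tfrac{\ud}{\ud t}\mathcal J)^{1/2}$ over dyadic time intervals, with Cauchy--Schwarz, gives $\int_t^\infty\|\nabla w\|\lesssim e^{-ct/2}$. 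The total drift of $\sigma_t$ is therefore $O(\delta^{1/2})$, and $\sigma_t\to\sigma_\infty$ exponentially.

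Parabolic smoothing on windows $[t-T_0/2,t]$ turns these integral bounds into $C^k$ bounds for $h_t$ of order $e^{-ct}$. In particular the $C^{3,\alpha}$ distance stays below $\delta_0/2$ when $\delta$ is small, which contradicts the maximality of $T^*$ unless $T^*=\infty$. Uniqueness comes from the local theory. Finally, combining the bounds on $h_t$ with the convergence $\sigma_t\to\sigma_\infty$ gives $\dist_{C^k}(E_t,E+\sigma_\infty)\le C_ke^{-ct}$ for $t\ge1$.
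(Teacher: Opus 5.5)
Your proposal is correct in outline, and its core matches the paper's. You obtain the Alexandrov-type estimate by linearising the first variation at $E$ and absorbing the remainders into the coercivity of $\delta^2\mathcal J(E)$. Your $H^{1/2}$ right-hand side is weaker than the paper's $L^2$ one but still suffices, because the trace theorem gives $\|\phi-\bar\phi\|_{H^{1/2}(\bd F)}\lesssim\|\nabla w\|_{L^2}$. The dissipation identity with Gronwall, and the continuation argument built on a uniform short-time theory, are the paper's restarting scheme.

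The genuine difference is how you handle the translations. You fix $\sigma_t$ by exact orthogonality through the implicit function theorem. You then run a {\L}ojasiewicz--Simon finite-length argument, using $|\dot\sigma_t|\lesssim\|V_t\|_{H^{-1/2}}\lesssim\|\nabla w_t\|_{L^2}$ and $\int_t^\infty\|\nabla w_s\|_{L^2}\ud s\lesssim(\mathcal J(E_t)-\mathcal J(E))^{1/2}$. This gives exponential convergence of $\sigma_t$ with an explicit rate, and a small total drift.

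The paper never differentiates $\sigma_t$: Lemma~\ref{lem:transl-Ef} only produces, at each time, some translate with approximate orthogonality. Instead it extracts a subsequential limit $\tau$ by compactness of $\T^N$. It then uses the functional $\mathcal D(E_t,E-\tau)$ from \cite{AFJM}, whose derivative $-\int\nabla\omega\cdot\nabla w_t$ is bounded by $Ce^{-ct}$ thanks to the pointwise decay of Step~2.

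Your version is more direct, but you must check that $t\mapsto\sigma_t$ is $C^1$. For that, restrict $\sigma_t$ to the complement of the translations leaving $E$ invariant, since only there is $\int_{\bd E}\nu_E\otimes\nu_E$ nondegenerate. This matters for lamellar or cylindrical critical sets.

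A second, smaller difference is the passage from time-integrated decay of $\|\nabla w_t\|^2$ to pointwise $C^k$ decay. You invoke a smoothing estimate that bounds $C^k$ norms \emph{linearly} by a weak, time-integrated norm of the graph. The short-time theory used in the paper (Proposition~\ref{prop:local-E}) only gives uniform bounds on higher norms, not bounds proportional to a weak norm of the data. The paper avoids this with the lower bound $c\|f\|_{L^1}^2\le\mathcal J(F)-\mathcal J(E)$ from \cite{AFM}, which turns energy decay into pointwise decay, and then interpolates against the uniform $C^k$ bounds. You can do the same: the Taylor expansion you already use, combined with Lemma~\ref{lem:coerc}, also gives $\mathcal J(E_h)-\mathcal J(E)\ge c\|h\|_{H^1}^2$. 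Interpolation then replaces your linear smoothing claim.
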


We now outline the main steps of the proof.
We first prove a local quantitative Alexandrov estimate for the Ohta--Kawasaki energy. More precisely, let $E$ be a strictly stable critical set and suppose that $F$ can be represented as a normal graph over   $E$ with  height function $f$ sufficiently small. We show the following inequality  
\begin{equation}\label{intro:Alex}
	\|f\|_{H^1(\bd E) }
	\leq
	C
	\left\|
        H_F+4\gamma v_F
        -
        \overline{H_F+4\gamma v_F}
    \right\|_{L^2(\partial F)} .
\end{equation}
In particular, if $(E_t)_{t\in[0,T^*)}$ denotes the modified Mullins--Sekerka flow starting from an initial set $E_0$ sufficiently close to $E$, then this estimate, combined with the dissipation identity~\eqref{intro:dissipation}, yields  exponential decay of the energy gap ${\mathcal J}(E_t)-{\mathcal J}(E)$ up to the extinction time $T^*$.

The proof of~\eqref{intro:Alex} follows the strategy of~\cite{DeKu}. We compare two expansions of the first variation of ${\mathcal J}$ around
the reference set $E$ and then exploit the coercivity of the second variation $\delta^2 {\mathcal J}(E)$. 
A related approach has recently been developed in~\cite{AlbCozMasMir} for the fractional perimeter energy, where the authors additionally require to prove a  uniform  coercivity estimate for $\delta^2 {\mathcal J}(F)$ holding for sets $F$ in a  neighborhood of the stable set $E$. 

Finally, we prove that the extinction time is infinite  and that the flow converges to a translate of $E$ in every $C^k$-norm, for all $k\in\N$.
For this purpose, we establish short-time regularity estimates that are uniform for initial data with bounded $C^{3,\alpha}$-norm. This is done  combining the approaches of~\cite{EscNis} and
\cite{XinJiaFah}. After a suitable change of unknown that transforms the moving-boundary problem into a problem on a fixed domain, the nonlocal 
contribution appears as a lower-order inhomogeneous term in the elliptic
problem associated with the Mullins--Sekerka system.
The results of~\cite{XinJiaFah} can thus be adapted
to obtain a uniform lower bound for the existence time, together with parabolic Schauder estimates. The crucial point is that the estimates depend only on an upper bound for the
$C^{3,\alpha}$-norm of the initial graph.

These estimates together with the exponential decay of the energy allows us
to restart the flow on successive intervals of uniform length and thereby obtain global existence.
The energy decay, combined with interpolation, then yields  the exponential convergence
in every $C^k$-norm, up to translations. Finally, we can show
that also the translations converge exponentially fast, yielding the
convergence of the whole flow to a single translate of $E$.


\section{Preliminaries}
In this section we collect some preliminary results used throughout the paper.

We consider the sharp-interface Ohta--Kawasaki energy
\begin{equation}\label{energy}
{\mathcal J}(E):=P(E)+F(E):= P(E)+ \gamma \int_{\T^N}\int_{\T^N} G(x,y) (u_E(x)-m)(u_E(y)-m) \ud x \ud y,
\end{equation}
where $u_E=\chi_E-\chi_{E^c}=2\chi_E-1$ and $m=\fint_{\T^N} u_E$. We recall that $G$ is the Green's function of the Laplacian in the flat torus, i.e., for every $x \in \T^N$, it is the unique solution to
\begin{equation}\label{eq:fund-T}
\begin{cases}
    -\Delta_y G(x,y)= \delta_x(y)-1,\quad y \in \T^N\\
    \int_{\T^N} G(x,y) \ud y=0, \quad y \in \T^N.
\end{cases}
\end{equation}
Note that $G(x,y)=G(y,x)$. Indeed, by integration by parts, $0=\int G(x,\cdot)\Delta G(y,\cdot)-G(y,\cdot)\Delta G(x,\cdot)=G(x,y)-G(y,x)$. Moreover, the translation invariance of the Laplacian implies that $G(x,y)=G(x-y,0)$. 

The potential $G$ admits the decomposition 
\[
G(x,y)=h(x-y) + r(x-y),
\]
where $h$ satisfies   $h(\rho)=C_N \rho^{2-N}$ for $N\ge 3$ and $h(\rho)=-\log(|\rho|)/2\pi$ for $N=2$ in a neighborhood of zero, while $r$ is smooth and one-periodic.
This can be proved by looking for a solution to~\eqref{eq:fund-T} of the form  $\eta h+r,$ where $\eta$ is a periodic cut-off function with compact support. 
In particular, $h$ satisfies the following H\"older estimate
\begin{equation}\label{eq:holderh}
|h(\rho_1)-h(\rho_2)|\le C\max\Big\{\frac{1}{|\rho_1|^{N-2+\beta}},\frac{1}{|\rho_2|^{N-2+\beta}} \Big\}|\rho_1-\rho_2|^\beta
\end{equation}
for every $\rho_1$, $\rho_2 \in \T^N \setminus \{0\}$,
where $C$ is a positive constant independent of $\rho_1$ and $\rho_2$.

We define 
\begin{equation}\label{def vE}
    v_E(x):=\int_{\T^N} G(x,y)(u_E(y)-m) \ud y,
\end{equation}
which satisfies
\[
-\Delta v_E=u_E-m \quad \text{in} \quad \T^N, \quad \int_{\T^N} v_E \ud x=0.
\]
Observe that, since $G$ has zero mean, we have  $v_E(x)=2 \int_E G(x,y) \ud y$ and
\begin{equation*}
{\mathcal J}(E)= P(E)+ 4\gamma \int_{E}\int_{E} G(x,y) \ud x \ud y.
\end{equation*}

We recall the following estimates.

\begin{lemma}\label{lem:est-G}
    Let $E\subset \T^N$ be a $C^{2}$ set. Then, it holds 
    \begin{equation}\label{eq:est-G}
        \sup_{x\in\partial E}\int_{\bd E} |G(x,y)|\udH_y <+\infty, \quad \| v_E\|_{C^{1,\beta}(\bd E)} <+\infty \text{ for all }\beta\in (0,1).
    \end{equation}
\end{lemma}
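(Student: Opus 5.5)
The plan is to use the decomposition $G(x,y)=h(x-y)+r(x-y)$ with $r$ smooth, and to treat the singular part $h$ by local flattening of $\partial E$. For the first claim, the contribution of $r$ is harmless: $\int_{\partial E}|r(x-y)|\udH_y\le \|r\|_\infty P(E)$. For $h$, we only need to consider $y$ in a small ball $B_\rho(x)$, since $h$ is bounded away from the origin. Because $E$ is $C^2$ (in fact $C^1$ suffices here), there is a radius $\rho>0$, uniform in $x\in\partial E$ by compactness, such that $\partial E\cap B_\rho(x)$ is the graph of a $C^1$ function over the tangent plane at $x$ with Lipschitz constant at most $1$. In these coordinates $|x-y|\ge |y'|$, where $y'$ is the projection onto the tangent plane, and the area element is bounded by $\sqrt2$. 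Hence
\[
\int_{\partial E\cap B_\rho(x)}|h(x-y)|\udH_y\le C\int_{\{|y'|<\rho\}}|y'|^{2-N}\,dy'<\infty,
\]
since $N-2<N-1$. For $N=2$ we use instead $\int_0^\rho|\log s|\,ds<\infty$. This bound is independent of $x$, which gives the supremum bound.

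For the second claim, $v_E$ solves $-\Delta v_E=u_E-m\in L^\infty(\T^N)$. By the Calderón–Zygmund estimates on the torus, $v_E\in W^{2,p}(\T^N)$ for every $p<\infty$, so Morrey's embedding gives $v_E\in C^{1,\beta}(\T^N)$ for every $\beta\in(0,1)$. Alternatively, one can argue directly from the representation $\nabla v_E(x)=2\int_E\nabla_xG(x,y)\,dy$: the kernel $\nabla h$ is locally integrable since it behaves like $|z|^{1-N}$, and the Hölder estimate \eqref{eq:holderh}, applied to $\nabla h$ in the standard Newtonian-potential way, shows that the potential of a bounded density is $C^{1,\beta}$. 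In either case, restricting to the $C^2$ hypersurface $\partial E$ preserves $C^{1,\beta}$ regularity: tangential derivatives are $\nabla v_E$ composed with the $C^1$ parametrization, times $C^1$ factors, and this composition stays $C^{0,\beta}$.

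The only genuinely delicate point is obtaining $C^{1,\beta}$ rather than just $C^1$ for the volume potential, because the right-hand side $u_E-m$ is discontinuous across $\partial E$. This is exactly why the claim holds for all $\beta<1$ but not at $\beta=1$. Going through the $W^{2,p}$ regularity for every $p<\infty$ avoids this difficulty cleanly, so that is the route I would take.
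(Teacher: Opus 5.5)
Your proposal is correct and is the same argument the paper gives in one line: uniform integrability of $G(x,\cdot)$ on $\partial E$ (the singular part behaves like $|x-y|^{2-N}$, resp.\ $\log|x-y|$, which is integrable on an $(N-1)$-dimensional $C^1$ surface), and elliptic regularity for $-\Delta v_E=u_E-m\in L^\infty(\T^N)$; you have made this explicit via uniform graph charts and Calder\'on--Zygmund plus Morrey. One aside is inaccurate but harmless: the estimate does not actually fail at $\beta=1$ for a $C^2$ set, since for $C^{1,\alpha}$ boundaries the Hessian of the volume potential of $\chi_E$ is bounded (piecewise H\"older up to $\partial E$ from each side), so $v_E\in C^{1,1}$ and the restriction $\beta<1$ only reflects the crude $L^\infty$-data argument.
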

The first bound  can be shown using the  integrability of $G$ over $\bd E$, the second one follows from elliptic regularity estimates since $u_E-m\in L^\infty(\T^N)$.

\subsection{Stable sets}
We recall the expressions for the first and second variations of ${\mathcal J}$ and the notion of stable sets used in the rest of the paper. 

Let $E\subset \T^N$ be a set of class $C^2$ and $X:\T^N\to \R^N$ be a vector field of class $C^2$. Consider the associated flow $\Phi:\T^N\times (-1,1)\to\T^N$ defined by $\frac {\bd \Phi}{\bd t}=X(\Phi),\ \Phi(\cdot,0)=Id$. We define the \textit{first and second variation of ${\mathcal J}$ at $E$} with respect to $\Phi$, respectively, by
\[
\delta {\mathcal J}(E)[X]:=\dfrac {\ud} {\ud t}\Big\lvert_{t=0}{\mathcal J}(E(t)), \quad \delta^2 {\mathcal J}(E)[X]:=\dfrac {\ud^2} {\ud t^2}\Big\lvert_{t=0}{\mathcal J}(E(t)), 
\]
where $E(t)=\Phi(\cdot,t)(E).$ 
Given a set $E$ of class $C^2$, we use the notation $\nu_E, H_E, B_E$ to denote the outer normal, scalar mean curvature and second fundamental form of $E$, respectively.  Also, given a vector field $X:\T^N\to \R^N$  we denote by  $X_\tau$ its tangential component along $\bd E$, defined as $X-(X\cdot\nu_E)\nu_E$. Similarly, $\div_\tau$ denotes the tangential divergence on $\bd E$.

Given a function $\varphi\in C^2_c(\T^N)$ we will use the notation 
\[
\delta {\mathcal J}(E)[\varphi]:= \delta {\mathcal J}(E)[\varphi\nu_E ],
\]
where $\nu_E$ denotes an extension of $\nu_E$ to a neighborhood of $\bd E$.

\begin{theorem}{\cite[Theorem~3.1]{AFM}}
	If $E$ and $X$ are as above, we have
 \begin{equation}\label{varprima}
 \delta {\mathcal J}(E)[X]=\int_{\bd^* E} (H_E+4\gamma v_E) \nu_E\cdot X\udH, 
 \end{equation}
and
\begin{align*}
	\delta^2 {\mathcal J}(E)[X]=&\int_{\bd E} \left( |D_\tau (X\cdot \nu_E)|^2-|B_E|^2 (X\cdot \nu_E)^2 \right)\udH \\
    &+8\gamma\int_{\bd E}\int_{\bd E}G(x,y)(X\cdot \nu_E)(x) (X\cdot \nu_E)(y) \udH(x) \udH(y)\\
    &+4\gamma \int_{\bd E}\partial_{\nu_E}v_E (X\cdot \nu_E)^2\udH-\int_{\bd E} (4\gamma v_E+H_E) \div_\tau (X_\tau(X\cdot \nu_E)) \udH\\
	&+\int_{\bd E} (4\gamma v_E+H_E)(\div X)(X\cdot \nu_E)  \udH.
\end{align*}
\end{theorem}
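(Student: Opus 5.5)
We compute the two derivatives of $t\mapsto{\mathcal J}(E(t))$ directly, treating the perimeter and the nonlocal term $F$ separately. For the perimeter, we use the standard first and second variation formulas along the flow $\Phi$. These give $\int_{\bd E}\div_\tau X\udH=\int_{\bd E}H_E\,X\cdot\nu_E\udH$. At second order they give the classical expression
\[
\int_{\bd E}\big(|D_\tau(X\cdot\nu_E)|^2-|B_E|^2(X\cdot\nu_E)^2\big)\udH+\int_{\bd E}H_E\big((\div X)(X\cdot\nu_E)-\div_\tau(X_\tau (X\cdot\nu_E))\big)\udH .
\]
We obtain it by differentiating the tangential Jacobian $J_\tau\Phi_t$ twice at $t=0$, using $\frac{\partial^2}{\partial t^2}\Phi=DX\,X$. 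We then rearrange the resulting terms, namely $(\div_\tau X)^2$, $\div_\tau(DX\,X)$ and $-\mathrm{tr}(D_\tau X D_\tau X)$, after splitting $X=X_\tau+(X\cdot\nu_E)\nu_E$ and integrating by parts on $\bd E$. This is the computation of Cagnetti--Mora--Morini type, which we would cite or reproduce.

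For the nonlocal part we write $F(E(t))=4\gamma\int_{E(t)}\int_{E(t)}G\,\ud x\ud y$; this is valid because the volume term $m$ changes, but it cancels thanks to the zero mean of $G$. Changing variables, $F(E(t))=4\gamma\int_E\int_E G(\Phi_t(x),\Phi_t(y))J\Phi_t(x)J\Phi_t(y)\ud x\ud y$. Differentiating once and using the symmetry of $G$ gives $8\gamma\int_E \div(v_E X/2)\ud x$, more precisely $4\gamma\int_E\div(v_EX)\ud x=4\gamma\int_{\bd E}v_E X\cdot\nu_E\udH$. Lemma~\ref{lem:est-G} justifies differentiating under the integral despite the singularity of $G$: the first derivative only involves $v_E$, which is $C^{1,\beta}$. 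Combined with the perimeter part, this yields \eqref{varprima}.

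For the second derivative we differentiate $t\mapsto 4\gamma\int_{E(t)}\div(v_{E(t)}X)\ud x=4\gamma\int_{E(t)}\big(\nabla v_{E(t)}\cdot X+v_{E(t)}\div X\big)$. Two contributions arise. The first comes from the moving domain with $v$ frozen; it equals $4\gamma\int_{\bd E}\div(v_EX)(X\cdot\nu_E)\udH$. We expand $\div(v_EX)=\nabla v_E\cdot X+v_E\div X$, and write $\nabla v_E\cdot X=\partial_{\nu_E}v_E (X\cdot\nu_E)+\nabla_\tau v_E\cdot X_\tau$. We integrate the tangential piece by parts, $\int_{\bd E}\nabla_\tau v_E\cdot X_\tau(X\cdot\nu_E)=-\int_{\bd E}v_E\div_\tau(X_\tau(X\cdot\nu_E))$. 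The second contribution comes from the variation of $v_{E(t)}$, namely $\frac{\ud}{\ud t}v_{E(t)}(x)=2\int_{\bd E}G(x,y)X\cdot\nu_E(y)\udH_y$ for fixed $x$. Inserted into $4\gamma\int_{\bd E}v\,X\cdot\nu_E$ after integration by parts, this produces the double boundary integral with coefficient $8\gamma$. Summing the perimeter and nonlocal pieces gives exactly the stated formula with weight $H_E+4\gamma v_E$.

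The main obstacle is the rigorous handling of the singular kernel in the second derivative: differentiating $v_{E(t)}$ in $t$ and exchanging it with the boundary integral. We would handle this through the decomposition $G=h+r$ and the Hölder bound \eqref{eq:holderh}. These show that $t\mapsto v_{E(t)}\circ\Phi_t$ is $C^1$ into $C^0(\bd E)$, with the difference quotients dominated via the uniform bound $\sup_x\int_{\bd E}|G(x,y)|\udH_y<\infty$ from Lemma~\ref{lem:est-G}. The rest is algebraic bookkeeping.
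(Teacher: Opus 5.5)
Your proposal is correct and follows the standard derivation of this formula. The paper gives no proof of its own and simply quotes \cite[Theorem~3.1]{AFM}; your route (the classical second variation of the perimeter, plus the nonlocal part computed from $\frac{\ud}{\ud t}F(E(t))=4\gamma\int_{E(t)}\div(v_{E(t)}X)\ud x$, with the next derivative split into the moving-domain term and the single-layer term $\dot v=2\int_{\bd E}G(\cdot,y)\,X\cdot\nu_E(y)\udH_y$) is, as far as I recall the cited source, the computation behind that result. There is one small slip in your sketch of the perimeter part: the second derivative of $J_\tau\Phi_t$ also contains $\sum_i(\nu_E\cdot D_{\tau_i}X)^2$, which is exactly the term producing $|D_\tau(X\cdot\nu_E)|^2$, so your list of three terms is incomplete (harmless, since you cite the classical formula).
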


\begin{definition}
	Let $E$ be a set of class $C^1$.
	Given a measurable function $f:\bd E \to \R$ such that $\|f\|_{L^{\infty}(\bd E)}$ is sufficiently small, we set
	\begin{equation}
		\label{normal_deformation}
		\bd E_f:=\{ x+f(x)\nu_E(x) : x\in \bd E \},
	\end{equation}
	and we call $E_f$ the \textit{normal deformation} of $E$ with height function $f$.
\end{definition}

We recall that, due to the translation invariance of the  functional ${\mathcal J}$, the second variation degenerates along flows of the form $\Phi(x,t)=x+t\eta,$ where 
$\eta\in \R^N$. In view of this, it is convenient to introduce the subspace $T(\bd E)$ of $\tilde H^1(\bd E):=\left\lbrace \varphi\in H^1(\bd E) : \int_{\bd E} \varphi \udH=0   \right\rbrace$ generated by the functions $\nu_i=\nu \cdot e_i$ for $i=1,\dots, N$. Its $L^2$-orthogonal subspace will be denoted by $T^\perp(\bd E)$ and is given by
\[ T^\perp(\bd E)= \Big\{ \varphi\in\tilde H^1(\bd E) : \int_{\bd E} \varphi  \nu_i\udH=0,\ i=1,\dots,N \Big\}.\]

We are now in a position to define the class of sets to which our main result applies.

\begin{definition}
	\label{definizione_strictly_stable_set}
We say that a smooth set $E \subset \T^N$  is \textit{a stable set} (for the energy ${\mathcal J}$) if it is a critical set for ${\mathcal J}$, i.e., 
\[\delta {\mathcal J}(E)[\varphi]=0, \qquad \forall \varphi \in \tilde H^1(\bd E),\] 
and has strictly positive second variation, i.e.,
	\[\delta^2 {\mathcal J}(E)[\varphi]>0,\qquad \forall \varphi \in T^{\perp} (\bd E) \setminus \{0\}.\]
\end{definition}
We observe that, if $E$ is a stable set and we consider volume-preserving variations, then the second variation of ${\mathcal J}$ at $E$ reduces to (see~\cite{AFM})
\begin{align*}
	\delta^2 {\mathcal J}(E)[X]=&\int_{\bd E} \left( |D_\tau (X\cdot \nu_E)|^2-|B_E|^2 (X\cdot \nu_E)^2 \right)\udH \\
    &+8\gamma\int_{\bd E}\int_{\bd E}G(x,y)(X\cdot \nu_E)(x) (X\cdot \nu_E)(y) \udH(x) \udH(y)\\
    &+4\gamma \int_{\bd E}\partial_{\nu}v_E (X\cdot \nu_E)^2\udH.
\end{align*}

We recall the following coercivity estimate. 
\begin{lemma}{\cite[Lemma 3.6]{AFM}}\label{lemma coercive}
    Assume that $E$ is a stable set for ${\mathcal J}$. Then there exists $m_0>0$ such that 
    \begin{equation*}
        \delta^2 \mathcal{J}(E)[\varphi]\ge m_0\|\varphi\|_{H^1(\bd E)}^2\qquad \text{for all } \varphi\in T^\perp(\bd E).
    \end{equation*} 
\end{lemma}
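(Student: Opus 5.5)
The statement to prove is the coercivity lemma: $\delta^2\mathcal J(E)[\varphi]\ge m_0\|\varphi\|_{H^1(\bd E)}^2$ on $T^\perp(\bd E)$. I will argue by contradiction with a compactness argument, using the explicit volume-preserving form of the second variation. Write
\[
\delta^2\mathcal J(E)[\varphi]=\int_{\bd E}|D_\tau\varphi|^2\udH+Q(\varphi),
\]
where
\[
Q(\varphi):=-\int_{\bd E}|B_E|^2\varphi^2\udH+8\gamma\int_{\bd E}\int_{\bd E}G(x,y)\varphi(x)\varphi(y)\udH\udH+4\gamma\int_{\bd E}\partial_\nu v_E\,\varphi^2\udH .
\]
The first step is to show that $Q$ is continuous with respect to strong $L^2(\bd E)$ convergence on bounded sets.

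For the terms with $|B_E|^2$ and $\partial_\nu v_E$ this is immediate, since $E$ is smooth and $v_E\in C^{1,\beta}$ by Lemma~\ref{lem:est-G}. For the nonlocal term, the first bound in~\eqref{eq:est-G} and the symmetry of $G$ give, via Schur's test, that the operator $\varphi\mapsto\int_{\bd E}G(\cdot,y)\varphi(y)\udH_y$ is bounded on $L^2(\bd E)$. Hence the bilinear form is continuous in $L^2$.

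Next, suppose the claim fails. Then there are $\varphi_n\in T^\perp(\bd E)$ with $\|\varphi_n\|_{H^1}=1$ and $\delta^2\mathcal J(E)[\varphi_n]<1/n$. Up to a subsequence, $\varphi_n\rightharpoonup\varphi$ weakly in $H^1(\bd E)$ and, by Rellich on the compact manifold $\bd E$, strongly in $L^2$. The constraints defining $T^\perp$ (zero mean and orthogonality to the $\nu_i$) are $L^2$-continuous, so $\varphi\in T^\perp(\bd E)$. By weak lower semicontinuity of the Dirichlet term and continuity of $Q$,
\[
\delta^2\mathcal J(E)[\varphi]\le\liminf_n\delta^2\mathcal J(E)[\varphi_n]\le 0 .
\]
Strict stability then forces $\varphi=0$.

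To reach the contradiction, I use $\varphi_n\to0$ in $L^2$, which gives $Q(\varphi_n)\to0$. Therefore
\[
\int_{\bd E}|D_\tau\varphi_n|^2\udH=\delta^2\mathcal J(E)[\varphi_n]-Q(\varphi_n)\to0 .
\]
Combined with $\|\varphi_n\|_{L^2}\to0$, this gives $\|\varphi_n\|_{H^1}\to0$, contradicting the normalization $\|\varphi_n\|_{H^1}=1$.

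The main point requiring care is the $L^2$ continuity of the nonlocal term, because of the singularity of $G$ on the diagonal. I would handle it with the Schur bound derived from Lemma~\ref{lem:est-G}, as described above. One also has to make sure that the reduced formula for the second variation applies to $\varphi\in T^\perp$; this follows because $E$ is critical, so $H_E+4\gamma v_E$ is constant on $\bd E$ and the extra terms drop for zero-mean variations.
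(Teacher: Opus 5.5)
Your argument is correct and is essentially the proof of \cite[Lemma 3.6]{AFM} that the paper cites. It uses weak $H^1$ compactness, $L^2$-continuity of the lower-order and nonlocal terms (your Schur bound from Lemma~\ref{lem:est-G} and the symmetry of $G$ is what is needed), and strict positivity on $T^\perp(\bd E)\setminus\{0\}$ to rule out a nonzero weak limit.

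The only inaccuracy is in your closing aside. Once $H_E+4\gamma v_E\equiv\lambda$, the surviving extra term $\lambda\int_{\bd E}(\div X)(X\cdot\nu_E)\udH$ equals $\lambda$ times the second derivative of $|E(t)|$ at $t=0$. It therefore vanishes for volume-preserving flows, not merely for zero-mean $\varphi$, which is why, as in \cite{AFM}, $\delta^2\mathcal J(E)[\varphi]$ should be read as the reduced quadratic form in both the hypothesis and the conclusion.
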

As a corollary, we deduce a coercivity estimate for functions which are approximately orthogonal to $T(\partial E)$. The proof follows the same argument as that of~\cite[Lemma 2.9]{DeKu}. It is based on the continuity in $H^1(\partial E)\times H^1(\partial E)$ of the bilinear form associated with  $\delta^2 {\mathcal J}(E)$ and it is thus omitted.
\begin{lemma}\label{lem:coerc}
    Assume that $E$ is a stable set for ${\mathcal J}$. Then, there exist $\delta,C>0$ such that if  $ \varphi\in H^1(\bd E)\cap C^1(\bd E)$ satisfies  $\|\varphi\|_{C^1(\bd E)}\le \delta$ and  
    \[
    \Big|\int_{\bd E}\varphi \udH\Big| +\Big|\int_{\bd E}\varphi \nu_E \udH\Big|\le \delta  \|\varphi\|_{L^2(\bd E)},
    \]
    then it holds 
    \begin{equation}\label{coercive}
        \delta^2 \mathcal{J}(E)[\varphi]\ge \frac{m_0}2\|\varphi\|_{H^1(\bd E)}^2,
    \end{equation}
    where  $m_0$ is defined in  Lemma~\ref{lemma coercive}.
\end{lemma}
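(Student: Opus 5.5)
The plan is to argue by contradiction, reducing to the coercivity of Lemma~\ref{lemma coercive} through a compactness step in $H^1(\bd E)$. We work only with the quadratic form from the volume-preserving reduced expression:
\[
Q(\varphi):=\int_{\bd E}\big(|D_\tau\varphi|^2-|B_E|^2\varphi^2\big)+8\gamma\iint_{\bd E\times\bd E}G\,\varphi\varphi+4\gamma\int_{\bd E}\partial_\nu v_E\,\varphi^2 .
\]
We apply this form also to functions that are not exactly in $T^\perp(\bd E)$; this is what $\delta^2\mathcal J(E)[\varphi]$ denotes here.

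The first step is to check continuity of $Q$. Since $E$ is smooth, $|B_E|^2$ and $\partial_\nu v_E$ are bounded (Lemma~\ref{lem:est-G}). The double integral is bounded on $L^2(\bd E)$: by Lemma~\ref{lem:est-G} the kernel $G$ restricted to $\bd E\times\bd E$ has uniformly bounded row integrals, so Schur's test applies. Hence
\[
|Q(\varphi)-Q(\psi)|\le C\|\varphi-\psi\|_{H^1}\big(\|\varphi\|_{H^1}+\|\psi\|_{H^1}\big).
\]
Moreover, $Q(\varphi)-\|D_\tau\varphi\|_{L^2}^2$ is continuous under strong $L^2$ convergence.

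Suppose the statement fails. Then there are $\varphi_n$ satisfying the hypotheses with $\delta=1/n$ and $Q(\varphi_n)<\frac{m_0}{2}\|\varphi_n\|_{H^1}^2$. By homogeneity of both the hypotheses (the smallness condition is scale invariant) and the conclusion, normalize $\psi_n=\varphi_n/\|\varphi_n\|_{H^1}$. The $C^1$ smallness is then irrelevant, so $\|\psi_n\|_{H^1}=1$ and
\[
\Big|\int\psi_n\Big|+\Big|\int\psi_n\nu_E\Big|\le \tfrac1n\|\psi_n\|_{L^2}\to0 .
\]
Decompose $\psi_n=\psi_n^\perp+\pi_n$, where $\pi_n$ is the $L^2$ projection onto $\mathrm{span}\{1,\nu_1,\dots,\nu_N\}$. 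These are smooth functions on $\bd E$, and $\pi_n\to0$ in $H^1$ (indeed in $C^1$) because its coefficients are controlled by the vanishing integrals. A Gram-matrix argument is needed here, since the $\nu_i$ may be linearly dependent; in that case one works with a basis of the span.

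Next, $\psi_n^\perp$ has zero mean and is $L^2$-orthogonal to each $\nu_i$, so it lies in $T^\perp(\bd E)$. Lemma~\ref{lemma coercive} then gives $Q(\psi_n^\perp)\ge m_0\|\psi_n^\perp\|_{H^1}^2$. By the continuity estimate,
\[
Q(\psi_n)\ge m_0\|\psi_n^\perp\|^2_{H^1}-C\|\pi_n\|_{H^1}\to m_0 ,
\]
which contradicts $Q(\psi_n)<m_0/2$. This direct argument avoids compactness altogether.

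The only delicate point is the definition of $T(\bd E)$, which the paper takes inside $\tilde H^1$. The hypothesis controls $\int\varphi$ and $\int\varphi\nu_E$ separately, and the projection onto constants together with the $\nu_i$ handles both. I do not expect a real obstacle beyond bounding $\|\pi_n\|_{H^1}\le C\delta\|\psi_n\|_{L^2}$. This bound follows from invertibility of the Gram matrix on the span, since the $\nu_i$ and $1$ are fixed smooth functions.

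A direct quantitative version then follows with $\delta$ chosen so that $C\delta\le m_0/2$. The $C^1$ hypothesis in the statement is presumably only needed later, when the lemma is applied to height functions; it plays no role in this proof.
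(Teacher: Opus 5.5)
Your proof is correct and is essentially the argument the paper omits by citing \cite[Lemma 2.9]{DeKu}: split $\varphi$ into its component in $T^\perp(\bd E)$ and a finite-dimensional remainder whose $H^1$ norm is $O(\delta)\|\varphi\|_{L^2(\bd E)}$. You then conclude using the continuity of the bilinear form of $\delta^2\mathcal J(E)$ on $H^1(\bd E)\times H^1(\bd E)$. The contradiction framing at the start is unnecessary, since your direct estimate already gives the bound once $\delta$ is small.
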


\section{Quantitative Alexandrov Theorem}\label{sec:contazzi}

The aim of this section is to prove the following stability estimate.

\begin{theorem}\label{teo alex}
Let $E \subset \T^N$ be a stable set.
There exist $\delta^* \in(0,1/2)$ and $C>0$, depending only on $E$, with the following property: for any $f\in C^1(\bd E)\cap H^2(\bd E)$ such that $\|f\|_{C^1(\bd E)}\leq \delta^*$ and satisfying
\begin{equation}
    \left |\int_{\bd E} f\ud \mathcal{H}^{N-1}\right |\le \delta^* \|f\|_{L^2(\bd E)},\qquad \left |\int_{\bd E} f\nu_E\ud \mathcal H^{N-1}\right|\leq \delta^*  \|f\|_{L^2(\bd E)},
    \label{hp alex}
\end{equation}
we have 
\begin{equation}
    \|f\|_{H^1(\bd E)}\leq C\|H_{E_f}(\cdot+f(\cdot)\nu_E(\cdot))-\widetilde H_{E_f}+ 4\gamma (v_{E_f}(\cdot+f(\cdot)\nu_E(\cdot))-\widetilde v_{E_f})\|_{L^2(\bd E)},
    \label{e:aleq_gen}
\end{equation}
where we have set
\[
\widetilde H_{E_f} := \fint_{\bd E} H_{E_f}(x+f(x)\nu_E(x)) \udH(x),\quad \widetilde v_{E_f}:= \fint_{\bd E} v_{E_f}(x+f(x)\nu_E(x)) \udH(x).
\]
\end{theorem}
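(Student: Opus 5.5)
We follow the strategy of \cite{DeKu}: compare two expansions of the first variation of $\mathcal J$ along the segment $t\mapsto E_{tf}$, $t\in[0,1]$, and then close the argument with the coercivity of Lemma~\ref{lem:coerc}. Set $g(t):=\delta\mathcal J(E_{tf})[\varphi_t]$, where $\varphi_t$ is the normal component along $\bd E_{tf}$ of the vector field $X:=f\nu_E$. Here $f$ and $\nu_E$ are extended constantly in the normal direction on a tubular neighbourhood, so that $\Phi(x,t)=x+tf(x)\nu_E(x)$ is exactly the flow of $X$. Then
\[
\mathcal J(E_f)-\mathcal J(E)=\int_0^1 g(t)\ud t,\qquad g'(t)=\delta^2\mathcal J(E_{tf})[X],
\]
and $g(0)=0$, since $E$ is critical and $f$ has almost zero average. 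The latter should be handled with the volume-preserving form of the second variation, or by noting that $H_E+4\gamma v_E$ is constant on $\bd E$.

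\textbf{First step.} We write
\[
g(1)=\int_{\bd E_f}(H_{E_f}+4\gamma v_{E_f})\,X\cdot\nu_{E_f}\udH .
\]
Pulling back to $\bd E$ with the Jacobian $J_f=1+O(|f|+|\nabla f|)$ and using $X\cdot\nu_{E_f}=f\,(1+O(|\nabla f|))$, we obtain
\[
g(1)=\int_{\bd E}\bigl(\mathcal H_f-\overline{\mathcal H_f}\bigr)\,f\,\psi_f\udH+\overline{\mathcal H_f}\int_{\bd E}f\psi_f\udH ,
\]
where $\mathcal H_f:=H_{E_f}(\cdot+f\nu_E)+4\gamma v_{E_f}(\cdot+f\nu_E)$ and $\psi_f=1+O(\|f\|_{C^1})$. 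Mean subtraction costs nothing here, since the mean minimizes the $L^2$ distance to constants.

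The second term is small for two reasons. First, $|\overline{\mathcal H_f}|\le C$ because $\|f\|_{C^1}\le\delta^*$. Note that no bound on the curvature is needed: $H_{E_f}$ involves $\nabla^2 f$ only linearly through a divergence, so its average over $\bd E$ is controlled after integration by parts. Second, volume constraints are absent, so we only know $|\int f|\le\delta^*\|f\|_{L^2}$. Correcting $\int f\psi_f$ gives $\int f\psi_f=\int f+O(\|f\|_{C^1}\|f\|_{L^2})$. Hence
\[
|g(1)|\le C\|\mathcal H_f-\overline{\mathcal H_f}\|_{L^2(\bd E)}\|f\|_{L^2}+C\delta^*\|f\|_{L^2}^2 .
\]
The last term is a problem. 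To avoid it, we replace $g$ by the Lagrangian $\tilde g(t)=g(t)-\lambda\,\frac{\ud}{\ud t}|E_{tf}|$ with $\lambda:=H_E+4\gamma v_E$, the constant value on $\bd E$. Then $\tilde g(0)=0$ exactly, and the constant part of $g(1)$ becomes $(\overline{\mathcal H_f}-\lambda)\int f\psi_f$. Since $|\overline{\mathcal H_f}-\lambda|\le C\|f\|_{H^1}$ (again after integration by parts) and $|\int f\psi_f|\le C\delta^*\|f\|_{L^2}$, this contribution is $\le C\delta^*\|f\|_{H^1}^2$. That is absorbable.

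\textbf{Second step.} We have $\tilde g(1)=\int_0^1\tilde g'(t)\ud t$, and
\[
\tilde g'(t)=\delta^2\mathcal J(E_{tf})[X]-\lambda\,\frac{\ud^2}{\ud t^2}|E_{tf}| .
\]
We compare this with the volume-preserving second variation at $E$, evaluated at $\varphi=f$. Using the formula of \cite[Theorem 3.1]{AFM}, we must show
\[
\bigl|\tilde g'(t)-\delta^2\mathcal J(E)[f]\bigr|\le \omega(\|f\|_{C^1})\|f\|_{H^1}^2,\qquad \omega(s)\to0 .
\]
The local terms are quadratic in $(f,\nabla f)$, with coefficients depending continuously on $E_{tf}$ in $C^1$, except for the curvature terms $(H+4\gamma v)\,\div_\tau(X_\tau X\cdot\nu)$. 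Those must be integrated by parts onto $\nabla f$, or paired as $(H_{E_{tf}}+4\gamma v-\lambda)$ times quadratic expressions. In the latter case one uses the identity $H_{E_{tf}}=\div_\tau\nu$ in divergence form, so that only $\|f\|_{C^1}$ smallness enters.

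The nonlocal terms $\iint G\,\varphi\varphi$ and $\bd_\nu v\,\varphi^2$ are continuous in the $C^1$ perturbation of the surface. For these we use Lemma~\ref{lem:est-G}, the decomposition $G=h+r$, and the H\"older bound \eqref{eq:holderh}, which control the change of kernel under $x\mapsto x+tf\nu_E$. This uniform comparison is the main obstacle, specifically the curvature terms, since $f$ is only small in $C^1$ and not in $C^2$. We intend to treat it by writing everything in divergence form on $\bd E$ before estimating.

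\textbf{Conclusion.} Lemma~\ref{lem:coerc} applies thanks to \eqref{hp alex} and gives $\delta^2\mathcal J(E)[f]\ge\frac{m_0}{2}\|f\|_{H^1}^2$. Combining with the two steps,
\[
\frac{m_0}{4}\|f\|_{H^1}^2\le \tilde g(1)\le C\|\mathcal H_f-\overline{\mathcal H_f}\|_{L^2(\bd E)}\|f\|_{H^1}+C\delta^*\|f\|_{H^1}^2 .
\]
Choosing $\delta^*$ small and dividing by $\|f\|_{H^1}$ yields \eqref{e:aleq_gen}. Note that $\overline{\mathcal H_f}=\widetilde H_{E_f}+4\gamma\widetilde v_{E_f}$, so the right-hand side is the one in the statement.
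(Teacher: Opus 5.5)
\textbf{Gap in Step 2.} Your architecture is sound and can be completed, but Step 2 is only sketched, and the sketch would not work. That step is the bound $|\tilde g'(t)-\delta^2\mathcal J(E)[f]|\le\omega(\|f\|_{C^1})\|f\|_{H^1}^2$, on which everything rests. You claim that in \cite[Theorem 3.1]{AFM} evaluated at $E_{tf}$, every local term except $(H+4\gamma v)\div_\tau(X_\tau\,X\cdot\nu)$ is quadratic in $(f,\nabla f)$ with coefficients continuous in $C^1$. This is false:
\begin{itemize}
\item $|B_{E_{tf}}|^2$ depends quadratically on $\nabla^2 f$;
\item $D_\tau(X\cdot\nu_{E_{tf}})$ contains $\nabla^2 f$, because $\nu_{E_{tf}}$ depends on $\nabla f$;
\item $(H+4\gamma v)\div X\,(X\cdot\nu)$ contains $H_{E_{tf}}$.
\end{itemize}
To leading order in local graph coordinates, $-|B_{E_{tf}}|^2(X\cdot\nu_{E_{tf}})^2$ contains $-t^2f^2|\nabla^2 f|^2$, and $-H_{E_{tf}}\div_\tau(X_\tau\,X\cdot\nu_{E_{tf}})$ contains $t^2f^2(\Delta f)^2$. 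Neither is bounded by $\omega(\|f\|_{C^1})\|f\|_{H^1}^2$. For instance, take $f=\varepsilon\phi\,(1+k^{-1}\sin(kx_1))$ with $\phi$ a fixed bump; then $\int f^2|\nabla^2 f|^2\sim\varepsilon^4k^2$ while $\|f\|_{H^1}^2\sim\varepsilon^2$. These terms cancel only in aggregate, after several integrations by parts that also involve $|D_\tau(X\cdot\nu_{E_{tf}})|^2$. So treating the $H\,\div_\tau$ term on its own does not close. Moreover, \cite[Theorem 3.1]{AFM} is stated for $C^2$ sets and fields, whereas $E_{tf}$ is only $C^1\cap H^2$ and $X$ only $C^1$.

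\textbf{Fix.} Avoid the general second-variation formula. For normal graphs, $P(E_{tf})=\int_{\bd E}\Psi(x,tf,t\nabla f)\udH$ with $\Psi$ smooth, and $|E_{tf}|=|E|+\int_{\bd E}\int_0^{tf}J(x,s)\ud s\udH$. Hence the local part of $\tilde g'(t)$ is
\[
\int_{\bd E}D^2\Psi(x,tf,t\nabla f)\bigl[(f,\nabla f),(f,\nabla f)\bigr]\udH-\lambda\int_{\bd E}\partial_sJ(x,tf)\,f^2\udH ,
\]
which differs from its value at $t=0$ by at most $C\|f\|_{C^1}\|f\|_{H^1}^2$; no second derivatives ever appear. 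For the nonlocal part, differentiate $t\mapsto 4\gamma\int_{\bd E}v_{E_{tf}}(x+tf\nu_E)\,f\,J(x,tf)\udH$ and use \eqref{eq:holderh} exactly as the paper does. At $t=0$ the terms $(H_E+4\gamma v_E-\lambda)H_Ef^2$ vanish, so $\tilde g'(0)$ is precisely the volume-preserving form of Lemma~\ref{lem:coerc}.

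\textbf{Comparison with the paper.} With this repair, your route is a genuine variant of the paper's. The paper expands the Euler--Lagrange operator against an arbitrary test function $\varphi$, importing the perimeter part from \cite{DeKu}. It tests with $\varphi=1$ to bound $\widetilde H_{E_f}+4\gamma\widetilde v_{E_f}-\lambda$ by absorption, and with $\varphi=f$ to conclude. Your identity $\tilde g(1)=\int_0^1\tilde g'$ is the $\varphi=f$ instance of the same expansion, obtained by integrating along the segment, and the frozen multiplier makes $\tilde g(0)=0$ exact. You control the mean by integration by parts instead: testing the first variation of area with $1/J(x,f)$ indeed gives $|\overline{\mathcal H_f}-\lambda|\le C\|f\|_{H^1}$, slightly cleaner than the paper's absorbed bound. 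Your version needs only continuity of the second variation along the path in the single direction $f$, not uniform coercivity near $E$ as in \cite{AlbCozMasMir}. The paper's general-$\varphi$ expansion, in turn, lets the perimeter computation be borrowed wholesale.
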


\begin{remark}
    We remark that the conditions \eqref{hp alex} are related to geometric conditions satisfied by the set $E_f$. The first assumption is simply the first order condition ensuring that $|E|=|E_f|$. In some special geometries, the second condition is satisfied if $E_f$ has the same barycenter as $E$. We refer to \cite{DeKu} for further details.
\end{remark}

\begin{proof}
For ease of notation, we will often omit the integration measure when it is clear from context.

As a first step we develop the first variation of the energy ${\mathcal J}=P+\gamma F$. 
We note that the perimeter term can be treated as in the proof of~\cite[Theorem 1.3]{DeKu}, hence we only focus on the nonlocal term.
Moreover, without loss of generality, we can assume that the right-hand side of~\eqref{e:aleq_gen} is less than or equal to one, otherwise the estimate is trivial.

Consider the map $(x,s) \mapsto x+s \nu_E(x)$  defined in a tubular neighborhood of $\partial E$ and denote by $J(x,s)$ its Jacobian. Recall the Taylor expansion $J(x,s)=1+H_E(x)s+O(s^2)$.
We also consider $\Phi_f(x):=x+f(x) \nu_E(x)$ for $x \in \partial E$ and let $\varphi \in H^1(\partial E)$.
By the first variation formula~\eqref{varprima} and the area formula,  we have
\[
\delta F(E_f)[\varphi] = \dfrac {\ud} {\ud \varepsilon}\Big\lvert_{\varepsilon=0}  F(E_{f+\varepsilon \varphi})
=
4\gamma\int_{\partial E}
v_{E_f}(\Phi_f(x))\varphi(x)J(x,f(x))\udH_x.
\]
We now expand the difference $v_{E_f}\circ\Phi_f-v_E$. Recalling that
$v_E(x)=2\int_E G(x,y)\ud y$ and  using the normal parametrization coordinates, we infer 
\begin{equation}\label{eq:exp-v}
v_{E_f}(\Phi_f(x)) = v_E(\Phi_f(x))+2\int_{\partial E}\int_0^{f(y)}
G(\Phi_f(x),y+s\nu_E(y))J(y,s)\ud s\udH_y .
\end{equation}
Since $v_E\in C^{1,\beta}(\mathbb T^N)$, we deduce
\[
v_E\circ \Phi_f-v_E
=
\partial_{\nu_E}v_E\, f
+
O(|f|^{1+\beta}).
\]
We next consider the second term in~\eqref{eq:exp-v}. Using the decomposition $G=h+r$  and the H\"older estimate~\eqref{eq:holderh}, for
$|s|\leq |f(y)|$ we have
\begin{equation*}
\left|
h\bigl(\Phi_f(x)-y-s\nu_E(y)\bigr)-h(x-y)
\right|\leq
C
\frac{
|\Phi_f(x)-x-s\nu_E(y)|^\beta
}{
|x-y|^{N-2+\beta}
}
\leq
C\frac{\|f\|_{C^0(\partial E)}^\beta}
{|x-y|^{N-2+\beta}},
\end{equation*}
where the denominators in \eqref{eq:holderh} are estimated using that the projection is Lipschitz in a fixed tubular neighborhood of $\partial E$
\[
|x-y|
\leq
C\left|
\Phi_f(x)-y-s\nu_E(y)
\right|.
\]
Moreover, $r$ satisfies the above estimate with
$\|f\|_{C^0(\partial E)}$ in place of $\|f\|_{C^0(\partial E)}^\beta$ as it is smooth. 
Since $N-2+\beta<N-1$, 
the kernel $|x-y|^{-(N-2+\beta)}$ is integrable on $\partial E$, uniformly
with respect to $x$. 
Therefore, 
using Cauchy-Schwartz, 
we deduce 
\begin{align*}
&\Big|
\int_{\partial E}\int_{\partial E} \int_0^{f(y)}
\big( G(\Phi_f(x),y+s\nu_E(y))
-
 G(x,y)\big) J(y,s) \varphi(x)  \ud s\udH_y \udH_x
\Big|
\\
&\le C\|f\|_{C^0(\partial E)}^\beta \int_{\bd E}\int_{\bd E}  \frac{(|f(y)|+O(|f(y)|^2))|\varphi(x)| }{|x-y|^{N-2+\beta}}\udH_y \udH_x \\
&\le
 C\|f\|_{C^0(\partial E)}^\beta  \left( \int_{\bd E}\int_{\bd E}  \frac{|f(y)|^2}{|x-y|^{N-2+\beta}}\right)^\frac12\left( \int_{\bd E}\int_{\bd E}  \frac{|\varphi(x)|^2}{|x-y|^{N-2+\beta}}\right)^\frac12 
\\
&\le
C\|f\|_{C^0(\partial E)}^\beta
\|f\|_{L^2(\partial E)}\|\varphi\|_{L^2(\bd E)}.
\end{align*}
Finally, using the Taylor expansion $J(\cdot,f)=1+H_Ef+O(f^2)$, we get 
\begin{align}
&4\gamma\int_{\partial E}
\bigl(v_{E_f}\circ\Phi_f-v_E\bigr)
\varphi(1+H_Ef)\udH \notag
\\
&=
4\gamma\int_{\partial E}
\partial_{\nu_E}v_E\,f\varphi\udH
+
8\gamma\int_{\partial E}\int_{\partial E}
G(x,y)f(y)\varphi(x)\udH_y\udH_x \label{eq:v_Ef-v_E}
\\
&\quad
+
O\big(
\|f\|_{C^0(\partial E)}^\beta
\|f\|_{L^2(\partial E)}
\|\varphi\|_{L^2(\partial E)}
\big). \notag
\end{align}

Let us introduce the notation $\mathcal R_f(\varphi)$ for all error terms satisfying 
\[
|\mathcal R_f(\varphi)|\le C(\delta^*)^\beta\|f\|_{L^2(\bd E)}\|\varphi\|_{L^2(\bd E)}.
\]
Coupling~\eqref{eq:v_Ef-v_E} together with~\cite[equation~(3.13)]{DeKu}, we obtain 
\begin{equation}\label{alex1}
    \begin{split}
       &\int_{\bd E} (H_{E_f}\circ \Phi_f-H_E+4\gamma(v_{E_f} \circ \Phi_f-v_E))\varphi (1+H_E f+R) \\
       &= \int_{\bd E} (\nabla f\cdot \nabla \varphi-|B_E|^2f\varphi) 
       +  8 \gamma \int_{\partial E} \int_{\partial E} G(x,y)f(y)\varphi(x) 
       + 4\gamma \int_{\bd E} \bd_\nu v_Ef\varphi\\
       &\quad+ \int_{\bd E} h\cdot \nabla \varphi +O(| \nabla f|^2)  \varphi + \mathcal R_f(\varphi),
    \end{split}
\end{equation}
where, using the notation of~\cite{DeKu}, $R=O(f^2)+O(|\nabla f|^2)$ and $|h| \le C (|f|+|\nabla f|^2)|\nabla f|$.
Choosing $\varphi=1$ in~\eqref{alex1} yields
\[    
\int_{\bd E} (H_{E_f}\circ \Phi_f-H_E+4\gamma(v_{E_f} \circ \Phi_f-v_E)) (1+H_E f+R) = \mathcal R_f(1)+ \int_{\bd E} O(|f|)+O(|\nabla f|^2) .
\]
Since $E$ is a critical set for ${\mathcal J}$, $H_E+4\gamma v_E$ is constant on $\bd E$, hence
\begin{equation}
    \begin{split}
        &|(\widetilde H_{E_f}-H_E)+4\gamma( \widetilde v_{E_f}- v_E)| \\
        &\le  \left\lvert \fint ( (H_{E_f} \circ \Phi_f- \widetilde H_{E_f})+4\gamma (v_{E_f}\circ \Phi_f-\widetilde v_{E_f}))(H_E f+R)\right\rvert \\
        &\quad+ \left\lvert \fint ( (\widetilde H_{E_f} -H_{E})+4\gamma (\widetilde v_{E_f}-v_E))(H_E f+R)\right\rvert + \mathcal R_f(1)+ \int_{\bd E}O(|f|)+O(|\nabla f|^2)\\
        &\le C\delta^*  \|(H_{E_f}\circ \Phi_f - \widetilde H_{E_f}) +4\gamma (v_{E_f}\circ \Phi_f-\widetilde v_{E_f})  \|_{L^2(\bd E)}\\
        &\quad + C\delta^* |(\widetilde H_{E_f}-H_E)+4\gamma (\widetilde v_{E_f}-v_E)| + C\|f\|_{H^1(\bd E)},
    \end{split}
\end{equation}
where we used H\"{o}lder's inequality, and $C>0$ denotes a constant only depending on $E$. 
In particular,  the previous inequality entails
\begin{equation*}
    |(\widetilde H_{E_f}- H_E)+4\gamma (\widetilde v_{E_f}-v_E)|\le  C\delta^*  \|(H_{E_f}\circ \Phi_f - \widetilde H_{E_f}) +4\gamma (v_{E_f}\circ \Phi_f -\widetilde v_{E_f})  \|_{L^2(\bd E)}+C\|f\|_{H^1(\partial E)}.
\end{equation*}
Testing now~\eqref{alex1} with $\varphi=f$, and using H\"older's and Young's inequalities together with the above estimate, we get for every $\eta>0$
\begin{align*}
     &\int_{\bd E} (|\nabla f|^2-|B_E|^2 f^2) +  8 \gamma \int_{\partial E} \int_{\partial E} G(x,y)f(x)f(y)+ 4\gamma \int_{\bd E} \bd_\nu v_Ef^2 \\
     &=  \int_{\bd E} ((H_{E_f}\circ \Phi_f-H_E)+4\gamma(v_{E_f}\circ \Phi_f-v_E)) f (1+H_E f+R) 
       +C(\delta^*)^\beta \|f\|_{H^1(\partial E)}^2\\
     &\le C\| (H_{E_f}\circ\Phi_f-\widetilde H_{E_f})+4\gamma (v_{E_f} \circ \Phi_f-\widetilde v_{E_f})\|_{L^2(\partial E)}\|f\|_{L^2(\partial E)} + C(\delta^*)^\beta \|f\|_{H^1(\partial E)}^2\\
     & \quad + |(\widetilde H_{E_f}- H_E)+4\gamma (\widetilde v_{E_f}-v_E)|\Big |\int_{\bd E} f (1+H_E f+ R) \Big |\\
     &\le \frac 1\eta C^2\| (H_{E_f}\circ \Phi_f-\widetilde H_{E_f})+4\gamma (v_{E_f}\circ \Phi_f-\widetilde v_{E_f})\|_{L^2(\partial E)}^2 + \eta\|f\|_{H^1(\partial E)}^2 + C(\delta^*)^\beta \|f\|_{H^1(\partial E)}^2,
\end{align*}
where in the last inequality we also used the first assumption in~\eqref{hp alex}. 
The conclusion follows by taking $\delta^*,\eta$ sufficiently small and using Lemma~\ref{lem:coerc}.

\end{proof}

\section{Dynamical Stability}\label{sec:stability}
In this section we employ the stability estimate of Theorem~\ref{teo alex} to prove the dynamical stability of stable sets under the modified Mullins-Sekerka flow. 

 We start by recalling that  a smooth family of sets $(E(t))_{t}\subset\T^N$ with $E(0)=E_0 \subset \T^N$, defined on some (maximal) time interval $(0, T^*)$, is a solution of  the \emph{modified Mullins-Sekerka flow} starting from $E_0$ if it satisfies
\begin{equation}\label{MSintro}
\begin{cases}
V(x,t)= [\bd_{\nu_{E(t)}}w(x,t)] & \text{ $x \in \bd E(t)$, $t \in (0,T^*)$,}\\
\Delta w(x,t)=0 & \text{ $x \in \T^N\setminus \bd E(t)$, $t \in (0,T^*)$,}\\
w(x,t)=H_{E(t)}(x)+4\gamma v(x,t) & \text{ $x \in \bd E(t)$, $t \in (0,T^*)$,}\\
-\Delta v(x,t)=u_{E(t)}(x)-\fint_{\T^N}u_{E(t)}(y) \ud y\,, & \text{ $x \in \T^N$, $t \in (0,T^*)$,}
\end{cases}
\end{equation}
with initial condition $E(0)=E_0$, where both $w$ and $v$ are subject to  periodic boundary conditions and $v$ has zero average. We recall that   $V$ stands for the outer normal velocity of the moving boundary $\bd E$, $u_{E(t)}:=2\chi_{E(t)} -1$ and $[\bd_{\nu_{E(t)}}w(\cdot,t)]$ denotes the jump of the normal derivative of $w(\cdot,t)$ at $\bd E(t)$, i.e.,  $[\bd_{\nu_{E(t)}}w(\cdot,t)]:=\bd_{\nu_{E(t)}}w(\cdot,t)^+-\bd_{\nu_{E(t)}}w(\cdot,t)^-$, with  $w(\cdot,t)^+$ and $w(\cdot,t)^-$ denoting the restrictions of $w(\cdot,t)$ to $\T^N\setminus \overline{E(t)}$ and $E(t)$, respectively.

\subsection{The nonlocal term is a perturbation }
In this section, we combine the results of~\cite{EscNis} and~\cite{XinJiaFah} to obtain existence and Schauder estimates for solutions to~\eqref{MSintro} starting from a sufficiently smooth set. 

Let $E\subset \T^N$ be a fixed smooth reference set, and set
$\Gamma:=\partial E$, $\Omega^-:=E$ and $\Omega^+:=\T^N\setminus \overline E$.
Let $\nu$ denote the outer unit normal to $E$. For a height function
$h:\Gamma\times[0,T]\to \mathbb R$, sufficiently small in $C^1$, we set 
\[
    \Phi_h(p,t):=p+h(p,t)\nu(p),
     \quad \Gamma_h(t)
    :=
    \{p+h(p,t)\nu(p):p\in\Gamma\}.
\]
We denote by $E_h$   the set enclosed by $\Gamma_h$, and write $\Omega_h^-:=E_h$ and $\Omega_h^+:=\T^N\setminus \overline E_h$.
We consider the modified Mullins--Sekerka system
\begin{equation}\label{eq:modified-MS}
\begin{cases}
    V_h=[\partial_{\nu_h}w_h]
        &\text{on }\Gamma_h,\\
    \Delta w_h=0
        &\text{in }\T^N\setminus\Gamma_h,\\
    w_h=H_{\Gamma_h}+4\gamma v_h
        &\text{on }\Gamma_h,\\
    -\Delta v_h=u_{E_h}-m_h
        &\text{in }\T^N,\\
   \int_{\T^N}v_h=0,
\end{cases}
\end{equation}
where $ u_{E_h}:=2\chi_{E_h}-1$ and $m_h:=\fint_{\T^N}u_{E_h}.$

Define $z_h:=w_h-4\gamma v_h $.
Since $\Delta w_h=0$ and  $
    \Delta v_h=-u_{E_h}+m_h$, 
we have $\Delta z_h
    =
    4\gamma(u_{E_h}-m_h)$ in $\T^N\setminus\Gamma_h$
and $ z_h=H_{\Gamma_h}$ on $\Gamma_h$.  
Moreover, since $u_{E_h}-m_h\in L^\infty(\T^N)$, elliptic regularity yields
$v_h \in W^{2,p}(\T^N)$ for every $1\le p <\infty$ and thus  $v_h \in C^{1,\beta}(\T^N)$  for every $\beta\in (0,1)$. In particular, we have $[\partial_{\nu_h}v_h]=0$ on $\Gamma_h$ and $[\partial_{\nu_h}z_h]
    =
    [\partial_{\nu_h}w_h].$ 
Thus~\eqref{eq:modified-MS} is equivalent to
\begin{equation}\label{eq:z-system-moving}
\begin{cases}
    V_h=[\partial_{\nu_h}z_h]
        &\text{on }\Gamma_h,\\
    \Delta z_h=4\gamma(u_{E_h}-m_h)
        &\text{in }\T^N\setminus\Gamma_h,\\
    z_h=H_{\Gamma_h}
        &\text{on }\Gamma_h.
\end{cases}
\end{equation}

Following~\cite[Section 2]{XinJiaFah}, we  apply the Hanzawa transformation~\cite{Han} to   rewrite~\eqref{eq:z-system-moving} on the fixed domains
$\Omega^\pm$. Let $d$ be the signed distance from
$\Gamma$,  and let $\pi$ be the nearest-point projection onto $\Gamma$. Choose $\rho>0$ such that the level sets of $d$ are as regular as $\Gamma$ in the tubular neighborhood $\{|d|<2\rho\}$, and choose $\zeta\in C_c^\infty((-2,2))$ a standard cutoff function with $\zeta\equiv 1$ on $(-1,1)$ and $0\le \zeta \le 1$. We   define the Hanzawa transform $X_h:\T^N \times [0,T]\to\T^N$ by
\begin{equation*}
\begin{aligned}
    X_h(x,t)
    &:=    x+\zeta\!\Big(\frac{d(x)}{\rho}\Big)
    h(\pi(x),t)\nu(\pi(x)) \quad &\text{ if } |d(x)|<2 \rho,\\
    X_h(x,t)
    &:= x \quad &\text{ if } |d(x)|\ge 2 \rho.
\end{aligned}
\end{equation*}
If $\sup_{t\in [0,T]}\|h(\cdot,t)\|_{C^1(\Gamma)}$ is sufficiently small, $X_h(\cdot,t)$ is a diffeomorphism on $\T^N$, uniformly in $t$, and it  satisfies $    X_h(\Gamma,t)=\Gamma_h(t) $ 
and $X_h(\Omega^\pm,t)=\Omega_h^\pm(t).$

Let  
\[
    Y_h:=X_h^{-1},\quad Z_h(y,t):=z_h(X_h(y,t),t).
\]
Since $X_h$ maps the fixed domains $\Omega^\pm$ onto the corresponding moving sets $\Omega_h^\pm$, it holds 
\begin{equation}\label{eq:pullback-characteristic}
    u_{E_h}(X_h(y,t))=u_E(y).
\end{equation} 
Define the transformed elliptic operator (see~\cite{XinJiaFah} for details)
\[
    \mathcal L_h Z_h
    :=
    (\Delta_x(Z_h\circ Y_h))\circ X_h=a_h^{ij}D_{ij}Z_h+b_h^iD_iZ_h,
\]
where $a_h^{ij}
    =
    \bigl(\nabla_x Y_h^i\cdot\nabla_xY_h^j\bigr)\circ X_h$ and $b_h^i
    =
    (\Delta_xY_h^i)\circ X_h.$
Thus, for $h(\cdot,t)\in C^{3,\alpha}(\Gamma)$ in a sufficiently small $C^1$-neighborhood of the origin,
$\mathcal L_h$ is uniformly elliptic and its coefficients depend
smoothly on $h$.
Under the previous change of unknowns,  system~\eqref{eq:z-system-moving} transforms into 
\begin{equation}\label{eq:z-system-fixed}
\begin{cases}
    \mathcal L_h Z_h=4\gamma(u_E-m_h)
        & \text{in }\Omega^\pm,\\
    Z_h=H_{\Gamma_h}\circ \Phi_h
        & \text{on }\Gamma,\\
    V_h\circ \Phi_h=[\partial_{\nu_h}Z_h]\circ \Phi_h
        & \text{on }\Gamma.
\end{cases}
\end{equation}
Note that the right-hand side of the first equation in~\eqref{eq:z-system-fixed} is constant in each of the two domains
\[
    4\gamma(u_E-m_h)
    =
    \begin{cases}
        4\gamma(1-m_h) & \text{in }\Omega^-,\\
        4\gamma(-1-m_h) & \text{in }\Omega^+.
    \end{cases}
\]
Moreover, the normal velocity $V_h$ can be related to the time-derivative of the height function $h$ as follows
\[
    V_h\circ \Phi_h = \bd_t h (\nu\cdot (\nu_h\circ \Phi_h)).
\]
For $\|h\|_{C^1}$ sufficiently small, the term $\nu\cdot (\nu_h\circ \Phi_h)$ in non-zero. Hence,  the third equation in~\eqref{eq:z-system-fixed} can be rewritten as 
\[
    \bd_t h = (\nu\cdot (\nu_h\circ \Phi_h))^{-1} [\partial_{\nu_h}(Z\circ Y_h)]\circ\Phi_h.
\]

After localization via a cut-off function $\eta$ and flattening near a point of $\Gamma$, the operator
$\mathcal L_h$ can be written as a perturbation of the  Laplacian. For a localized unknown $U=\eta Z_h$, one can write
\[
    \Delta U
    =
    \operatorname{div}F_{1,h}+F_{2,h}
\]
in the fixed half-spaces, for some suitable functions $F_{1,h}$ and $F_{2,h}$. 
For more details on the localization and freezing of the coefficients we refer to the proof of~\cite[Lemma 4.2]{XinJiaFah}. 
In particular, $F_{2,h}$ consists of coefficients and cutoff errors together, as well as the additional Ohta--Kawasaki term 
\[
    F_{2,h}^{\mathrm{OK}}
    :=
    4\gamma\eta(u_E-m_h).
\]
To check that this additional term can be treated with the fixed point  argument of~\cite{XinJiaFah}, it is enough to
verify that $F_{2,h}^{OK}$  satisfies   the required estimates  in
the  parabolic H\"older spaces. Specifically, we refer to Lemma 3.4 and Section 5 in~\cite{XinJiaFah}. Since  $m_h=2|E_h|-1$,   
the map $h\mapsto m_h$ is smooth  in a
sufficiently small $C^0$-neighborhood of the origin. Consequently, on
every bounded subset of
$C^{\alpha/3}([0,T];C^0(\Gamma))$, we have
\[
\|m_h\|_{C^{\alpha/3}([0,T])}\leq C
\]
and, for every $h,\tilde h $ in a bounded subset of  $C^{\alpha/3}([0,T];C^0(\Gamma))$, 
\[
\|m_h-m_{\widetilde h}\|_{C^{\alpha/3}([0,T])}
\leq
C\|h-\widetilde h\|_
{C^{\alpha/3}([0,T];C^0(\Gamma))}.
\]
Since $u_E$ is constant in each $\Omega^\pm$ and the cutoff $\eta$ is
smooth, it follows that
\[
\|F^{\mathrm{OK}}_{2,h}\|_
{C^{\alpha,\alpha/3}(\Omega^\pm\times[0,T])}
\leq C\gamma
\]
and
\[
\|F^{\mathrm{OK}}_{2,h}
      -F^{\mathrm{OK}}_{2,\widetilde h}\|_
{C^{\alpha,\alpha/3}(\Omega^\pm\times[0,T])}
\leq
C\gamma
\|h-\widetilde h\|_
{C^{\alpha/3}([0,T];C^0(\Gamma))}.
\]
Hence, the additional term satisfies the boundedness and local Lipschitz
properties required in 
the fixed-point scheme of~\cite[Section 5]{XinJiaFah}.
Moreover, being lower-order, it does not affect the principal
third-order operator, and the fixed-point argument of~\cite{XinJiaFah} applies accordingly,  with constants that may  depend on $\gamma$ but depend on the initial  height function only through a prescribed $C^{3,\alpha}$-bound.
This is the observation made in~\cite{EscNis}, adapted here to the framework of~\cite{XinJiaFah}.

Once short-time existence is shown in $C^{3,\alpha}$, additional regularity follows by the bootstrap argument of~\cite[Section 5]{XinJiaFah}. Note that the term $m_h$ is now a fixed constant $m_h=m_{h_0}$, so  $F^{\mathrm{OK}}_{2,h}$  does not appear in the higher-regularity computations.

\begin{proposition}\label{prop:local-E}
Let $E\subset\T^N$ be a smooth set  and let $\gamma,\alpha\ge0$.
Then, there exists $M_0=M_0(E)>0$ such that for every
$M\in(0,M_0)$ and $\ell\in\N$ there are constants
\[
    T=T(E,\alpha,
    \gamma,
    M)>0,
    \quad
    C=C(E,\alpha,\gamma,
    M)>0, \quad C_\ell=C_\ell(E,\alpha,
    \gamma,
    M)>0
\]
with the following property. 
For every $h_0\in C^{3,\alpha}(\Gamma)$  with $\|h_0\|_{C^{3,\alpha}(\Gamma)}\le M$,  
the modified Mullins--Sekerka flow with initial datum 
$\Gamma_{h_0}$ admits a unique classical solution
\[
    \Gamma(t)=\partial E_{h(\cdot,t)},
    \qquad t\in[0,T],
\]
such that
\begin{align}\label{eq:short-time-basic}
    & h\in
    C([0,T];C^{3,\alpha}(\Gamma))
    \cap
    C^1([0,T];C^\alpha(\Gamma)),\\
    &\sup_{0\le t\le T}
    \|h(\cdot,t)\|_{C^{3,\alpha}(\Gamma)}
    +
    \sup_{0\le t\le T}
    \|\partial_t h(\cdot,t)\|_{C^\alpha(\Gamma)}
    \le C.    \label{eq:short-time-bound}
\end{align}
Moreover, for every integer $\ell\ge1$ it holds 
\begin{equation}\label{eq:short-time-smoothing}
    \sup_{0<t\le T}
    t^{\ell/3}
    \|h(\cdot,t)\|_{C^{3+\ell,\alpha}(\Gamma)}
    \le C_\ell.
\end{equation}
\end{proposition}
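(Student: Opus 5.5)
The plan is to reduce Proposition~\ref{prop:local-E} to the short-time theory of~\cite{XinJiaFah} for the classical Mullins--Sekerka problem. The nonlocal term is treated as a lower-order perturbation inside their contraction scheme, along the lines of the discussion preceding the statement. We work with the transformed system~\eqref{eq:z-system-fixed} for the unknowns $(h,Z_h)$.

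The key observation is that, for fixed $h$, the elliptic transmission problem
\[
\mathcal L_h Z=4\gamma(u_E-m_h)\ \text{in }\Omega^\pm,\qquad Z=H_{\Gamma_h}\circ\Phi_h\ \text{on }\Gamma
\]
is solvable. By linearity, its solution splits as $Z=Z^0+Z^{\mathrm{OK}}$. Here $Z^0$ is the solution with zero right-hand side, which is exactly the one treated in~\cite{XinJiaFah}. The remainder $Z^{\mathrm{OK}}$ solves $\mathcal L_h Z^{\mathrm{OK}}=4\gamma(u_E-m_h)$ with zero Dirichlet data. Schauder theory for $\mathcal L_h$, whose coefficients are controlled by $\|h\|_{C^{3,\alpha}}$ (only $C^{2,\alpha}$ control of $X_h$ is really needed), gives a $C^{2,\alpha}(\overline{\Omega^\pm})$ bound on $Z^{\mathrm{OK}}$. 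Consequently its normal jump contributes a term in $C^{1,\alpha}(\Gamma)$. This term is two derivatives smoother than the principal part $[\partial_\nu Z^0]$, which is of order three in $h$.

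Next we set up the fixed point. Let $X_T$ be the parabolic Hölder space of~\cite{XinJiaFah}, namely $h\in C([0,T];C^{3,\alpha})\cap C^1([0,T];C^\alpha)$ with $h(0)=h_0$, and take a ball of radius $2C_0M$ in it, with $M<M_0$ small enough that $X_h$ is a diffeomorphism and $\mathcal L_h$ is uniformly elliptic. Using the localized form $\Delta U=\operatorname{div}F_{1,h}+F_{2,h}$ and the bounds on $F_{2,h}^{\mathrm{OK}}$ established above, we check that the map $\tilde h\mapsto h$ obtained by solving the linearized problem of~\cite{XinJiaFah} maps the ball into itself and is a contraction. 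This rests on two ingredients: the boundedness and the Lipschitz estimate for $h\mapsto m_h$ in $C^{\alpha/3}([0,T];C^0)$. Since these perturbation terms are of lower order, they come with a positive power of $T$ in the estimates. Choosing $T$ small, depending only on $M$ (and $E,\alpha,\gamma$), therefore gives existence, uniqueness and~\eqref{eq:short-time-bound}. Volume preservation makes $m_h\equiv m_{h_0}$ along the solution.

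I expect the main obstacle to be verifying that the constants depend on $h_0$ only through the bound $\|h_0\|_{C^{3,\alpha}}\le M$, with no dependence on any finer regularity of $h_0$. To handle this, I would follow the proof of~\cite[Section 5]{XinJiaFah} and track every constant. There, the linearized operator is frozen at the reference surface $\Gamma$ rather than at $\Gamma_{h_0}$, so the frozen operator is independent of $h_0$, and $h_0$ enters only through the ball radius.

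Finally, the smoothing estimate~\eqref{eq:short-time-smoothing} comes from the bootstrap of~\cite[Section 5]{XinJiaFah}. One applies the scaled weighted Schauder estimates to $t^{\ell/3}\partial^\ell h$ (equivalently, to difference quotients with a time weight). This is legitimate because the OK source $4\gamma(u_E-m_{h_0})$ is piecewise constant in $y$ and constant in time, so it contributes no new terms under tangential differentiation beyond cutoff commutators. An induction on $\ell$ then yields the bound with constants $C_\ell(E,\alpha,\gamma,M)$.
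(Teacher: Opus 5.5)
Your overall route is the paper's: Hanzawa reduction, the source $4\gamma(u_E-m_h)$ treated as a lower-order perturbation in the scheme of \cite{XinJiaFah} through the bounds on $h\mapsto m_h$, and the bootstrap using $m_h\equiv m_{h_0}$. However, the fixed-point step fails as you set it up. A ball of radius $2C_0M$ is not invariant, and the claim that the perturbation terms ``come with a positive power of $T$'' holds only for differences.

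\emph{Contraction.} Here the claim is correct. Since $h-\tilde h$ vanishes at $t=0$, one has $\|h-\tilde h\|_{C^{\alpha/3}([0,T];C^0(\Gamma))}\le CT^{\theta}\|h-\tilde h\|_{X_T}$, so the Lipschitz bound on $m_h$ gives a small factor.

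\emph{Self-map.} Here the claim is false. At $t=0$ the source is $4\gamma(u_E-m_{h_0})$ whatever $\tilde h$ is. Its normal jump enters $\partial_t h(\cdot,0)$ directly, and $\sup_t\|\partial_t h\|_{C^\alpha(\Gamma)}$ is part of the norm of $X_T$. So the bound $\|F^{\mathrm{OK}}_{2,h}\|\le C\gamma$ only gives an image of radius $C_0M+C\gamma$, with no gain as $T\to0$.

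\emph{Failure for general $E$.} The proposition is stated for an arbitrary smooth $E$. For $h_0=0$, $\partial_t h(\cdot,0)$ is the normal velocity $[\partial_\nu w]$ of the flow at $E$, where $w$ is harmonic off $\partial E$ with $w=H_E+4\gamma v_E$ on $\partial E$. This velocity is nonzero unless $E$ is a critical set of $\mathcal J$. So once $M<\|[\partial_\nu w]\|_{C^\alpha(\Gamma)}/(2C_0)$, the solution cannot lie in your ball. This happens already for $\gamma=0$.

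\emph{Fix.} The repair is standard, and it is what the form of the statement reflects: $C=C(E,\alpha,\gamma,M)$ is not claimed to be $O(M)$.
\begin{itemize}
\item Take the radius $R=R(E,\alpha,\gamma,M)$ large, for instance twice the norm of the solution of the linear problem with datum $h_0$ and the $h$-independent sources (the zeroth-order curvature term and the Ohta--Kawasaki source at $h_0$).
\item Make the quasilinear top-order remainder small through lower norms rather than through $R$. Use $\|h(\cdot,t)\|_{C^{2,\alpha}(\Gamma)}\le M+CT^{\theta}R$, first with $M<M_0(E)$ and then with $T$ small depending on $R$.
\item Take powers of $T$ for the Ohta--Kawasaki term only in the difference estimates.
\end{itemize}
With this bookkeeping, your remaining points go through as you describe them: the dependence on $h_0$ only through $M$, the role of $m_h\equiv m_{h_0}$, and the bootstrap for the smoothing estimate.
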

We emphasize that in the result above all the constants  are uniform with respect to $h_0$ as long as
$\|h_0\|_{C^{3,\alpha}(\Gamma)}\le M$.

\subsection{Proof of the Main Result}

We start by recalling  a technical lemma whose proof follows  from~\cite[Lemma 3.8]{AFM} combined with~\cite[Theorem 1.1]{AFM}.
\begin{lemma}\label{lem:transl-Ef}
    Let $E\subset \T^N$ be a stable set and $p>N-1$. For every $\varepsilon>0$, there exist constants $C>c>0$, $\rho>0$ such that the following holds. If $F\subset \T^N$ is a $W^{2,p}$ set satisfying 
    \[
    |F|=|E|, \quad \inf_{\sigma\in\T^N}\textnormal{dist}_{W^{2,p}}(E,F+\sigma)\le \rho,
    \]
    then there exist $\sigma\in \T^N$ and $f\in W^{2,p}(\bd E) $ such that $F+\sigma=E_f$,   $ \|f\|_{W^{2,p}(\bd E)}\le C\rho $ and  
    \begin{equation}\label{eq:ortho}
    \Big |\int_{\bd E}f\nu_E \udH\Big | \le \varepsilon \|f\|_{L^2(\partial E)},\quad \Big| \int_{\bd E} f \udH \Big|\le C\|f\|^2_{L^2(\bd E)}.
    \end{equation}
    Moreover, it holds 
    \begin{equation}\label{eq:energy-equivalence}
    c\|f\|_{L^1(\partial E)}^2
    \le
    {\mathcal J}(F)-{\mathcal J}(E)
    \le
    C\|f\|_{H^1(\partial E)}^2.
\end{equation}

\end{lemma}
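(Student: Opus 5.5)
The plan is to prove Lemma~\ref{lem:transl-Ef} in three parts: the representation of $F+\sigma$ as a normal graph, the two orthogonality conditions in~\eqref{eq:ortho}, and the energy bounds~\eqref{eq:energy-equivalence}.

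\textbf{Choice of translation and orthogonality.} Since $F$ is $W^{2,p}$-close to some translate of $E$ and $p>N-1$, the Sobolev embedding gives $W^{2,p}(\bd E)\hookrightarrow C^{1,\beta}(\bd E)$. Hence every translate $F+\sigma$ with $\sigma$ near the optimal one can be written as $E_{f_\sigma}$, with $\|f_\sigma\|_{W^{2,p}}\le C\rho$ and with $f_\sigma$ depending continuously on $\sigma$. I would fix $\sigma$ by minimizing $\sigma\mapsto |(F+\sigma)\triangle E|$, or equivalently $\int_{\bd E}|f_\sigma|$-type quantities, over a small ball. This is the procedure of~\cite[Lemma 3.8]{AFM}. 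The first-order optimality condition in $\sigma$ then reads
\[
\int_{\bd E} f\,\nu_E\,(1+O(|f|))\udH=0 .
\]
Any residual term is controlled by $C\|f\|_{C^0}\|f\|_{L^2}\le C\rho\|f\|_{L^2}$. Choosing $\rho$ small relative to $\varepsilon$ therefore gives the first estimate in~\eqref{eq:ortho}.

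For the second estimate, the volume constraint $|E_f|=|E|$ in normal coordinates gives
\[
0=\int_{\bd E}\int_0^{f}J(x,s)\ud s=\int_{\bd E}\Bigl(f+\tfrac12H_Ef^2+O(f^3)\Bigr).
\]
This yields $|\int_{\bd E} f|\le C\|f\|_{L^2}^2$.

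\textbf{Energy bounds.} For the upper bound I Taylor expand $t\mapsto{\mathcal J}(E_{tf})$ to second order. Criticality of $E$, together with the constancy of $H_E+4\gamma v_E$ and the volume constraint, removes the first-order term. The second variation is continuous on $H^1$, and the remainder is controlled by the $C^{1}$-smallness of $f$. Together these give ${\mathcal J}(F)-{\mathcal J}(E)\le C\|f\|_{H^1}^2$. This is the standard computation behind~\cite[Theorem 1.1]{AFM}, and translation invariance lets us replace $F$ by $F+\sigma$.

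For the lower bound I use the quantitative local minimality of strictly stable sets, \cite[Theorem 1.1]{AFM}. It states that for volume-constrained sets $L^1$-close to $E$,
\[
{\mathcal J}(F)\ge {\mathcal J}(E)+c\,\inf_\sigma|F\triangle(E+\sigma)|^2 .
\]
It remains to check that $|E_f\triangle E|\ge c\|f\|_{L^1(\bd E)}$ and that the infimum over translations is comparable to our chosen $\sigma$. The first holds because the Jacobian $J$ is bounded below in the tubular neighbourhood. The second holds because $\sigma$ minimizes the symmetric difference.

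\textbf{Main obstacle.} I expect the hardest step to be reconciling these two choices of translation. The $\sigma$ given by the $L^1$-minimization must also give the approximate orthogonality, with smallness $\varepsilon$ uniform over $F$. I would first attempt this through the explicit Euler--Lagrange condition of the minimization, arguing by contradiction and compactness if the direct computation is not clean.
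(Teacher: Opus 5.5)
Your volume expansion (the second estimate in \eqref{eq:ortho}) and the upper energy bound are fine. In the latter, the first-order term is $\lambda\int_{\bd E}f=O(\|f\|_{L^2}^2)$ rather than zero, where $\lambda$ is the constant value of $H_E+4\gamma v_E$. The gap is in the choice of translation. The map $\sigma\mapsto|(F+\sigma)\triangle E|=\int_{\bd E}\bigl|\int_0^{f_\sigma}J(x,s)\ud s\bigr|\udH$ is of $L^1$ type. Its optimality condition is not $\int_{\bd E}f\nu_E(1+O(|f|))=0$ but the median-type condition
\[
0\in\int_{\bd E}\operatorname{sgn}(f_\sigma)\,J(x,f_\sigma)\,\partial_\sigma f_\sigma\udH,\qquad \partial_\sigma f_\sigma=\nu_E+O(|f_\sigma|+|\nabla f_\sigma|),
\]
with $\operatorname{sgn}(0)=[-1,1]$, and this says nothing about $\int_{\bd E}f\nu_E$.

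The orthogonality genuinely fails for this choice. Take the slab $E=\{0<x_N<a\}$, which is strictly stable for instance when $\gamma=0$, and $F=\{\delta h(x')<x_N<a+\delta h(x')\}$ with $h\ge0$ smooth and vanishing on more than half of $\T^{N-1}$. Then $|F|=|E|$, and $|(F+\tau)\triangle E|=2\int_{\T^{N-1}}|\delta h+\tau_N|\ud x'$ is minimized (locally) exactly at $\tau_N=0$, since $0$ is the median of $h$. For every such $\tau$ the height function is a tangential shift of $\delta h$ on the top face and $-\delta h$ on the bottom face. Hence
\[
\Big|\int_{\bd E}f\nu_E\udH\Big|=2\delta\int_{\T^{N-1}}h\ud x',\qquad \|f\|_{L^2(\bd E)}=\sqrt2\,\delta\,\|h\|_{L^2(\T^{N-1})},
\]
and the ratio is a positive constant independent of $\delta$. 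So the first estimate in \eqref{eq:ortho} fails for small $\varepsilon$ however small $\rho$ is. The ``reconciliation'' you hope to obtain by contradiction and compactness is false, not merely hard.

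What you need is a quadratic, mean-type selection. For instance, minimize $\sigma\mapsto\|f_\sigma\|_{L^2(\bd E)}^2$, or $\int_{(F+\sigma)\triangle E}\dist_{\bd E}\ud x$ (which is $\frac12\|f_\sigma\|_{L^2}^2$ up to cubic terms). Do this over a small ball in the span of the directions $e$ with $\nu_E\cdot e\not\equiv0$; in the remaining directions $E$ is translation invariant and $f_\sigma$ is merely shifted. In the nondegenerate directions $\|f_\sigma\|_{L^2}\ge c|\sigma|-C\rho$, so the minimum is interior. The Euler--Lagrange equation $\int_{\bd E}f_\sigma\,\partial_\sigma f_\sigma\udH=0$ then gives $|\int_{\bd E}f\nu_E|\le C\|f\|_{C^1}\|f\|_{L^1}\le C\rho\|f\|_{L^2}$, using $W^{2,p}\hookrightarrow C^1$ for $p>N-1$. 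This is what \cite[Lemma 3.8]{AFM}, quoted by the paper, supplies.

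With this $\sigma$, your lower-bound argument has to change, because $\sigma$ no longer minimizes the symmetric difference. You have two options.
\begin{itemize}
\item Prove $\|f\|_{L^1}\le C\inf_\tau|(F+\tau)\triangle E|$ directly. This is doable: the Euler--Lagrange error $C\rho\|f\|_{L^1}$ lets you bound the distance between the two translations by the $L^1$ norm of the $L^1$-optimal height.
\item More directly, apply Lemma~\ref{lem:coerc} to the approximately orthogonal $f$ in a second-order expansion of $\mathcal J$ along a volume-preserving deformation of $E$ into $E_f$, as in the second-variation argument of \cite{AFM}. This yields $\mathcal J(F)-\mathcal J(E)\ge c\|f\|_{H^1}^2\ge c\|f\|_{L^1}^2$.
\end{itemize}
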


\begin{remark}\label{rmk:reg-f}
We recall that the function $f$ is defined via a normal projection from the set $F$ and by the implicit function theorem. In particular, following the proof of~\cite[Lemma 3.8]{AFM}, we deduce that, if $F$ is of class $C^k$ for $k\in\N$, then so is  $f$, with bounds depending on the distance $\dist_{C^k}(F,E)$.
\end{remark}

We are now able to show our main result concerning the dynamical stability of the flow. 

\begin{proof}[Proof of Theorem~\ref{thm:main}]
Let $M \in (0,M_0(E))$ with $M_0(E)$ being the constant given by Proposition~\ref{prop:local-E}.  Throughout the proof, the constants $C > c>0$ may depend on
$E$, $\alpha$ and $\gamma$, but not on $E_0$, and may change from line to line.

By assumption there exists $h_0\in C^{3,\alpha}(\bd E)$ such that $E_0=E_{h_0}$ and $\|h_0\|_{C^{3,\alpha}(\bd E)}\le \delta$. For $\delta$ small, depending on $M$, Proposition~\ref{prop:local-E} ensures that the modified Mullins--Sekerka flow $E_t$ starting from $E_0$ exists for a positive time $T>0$ and that there exists $h \in C([0,T];C^{3,\alpha}(\Gamma))
    \cap
    C^1([0,T];C^\alpha(\Gamma))$ satisfying~\eqref{eq:short-time-basic}-\eqref{eq:short-time-smoothing} such that $E_t=E_{h(\cdot,t)}$. Let us also remark that considering smaller $\delta$ does not decrease $T$. \\
\noindent\textbf{Step 1:} We start by proving that, as long as the flow exists, it satisfies  
\[
{\mathcal J}(E_t)-{\mathcal J}(E)\le C e^{-ct}.
\]
We recall the following  identities holding along the smooth flow 
\[
    \dfrac \ud{\ud t}|E_t|=0,\quad \dfrac \ud{\ud t}{\mathcal J}(E_t)= - \int_{\T^N} |\nabla w_t|^2 \ud x,    
\]
where $w_t$ is the harmonic function satisfying $w_t=H_{E_t}+4\gamma v_{E_t}$ on $\partial E_t$.
By Poincare's inequality on $\T^N$ and trace estimates for $w_t$, we deduce 
\begin{equation}\label{smooth comput}
\dfrac  \ud{\ud t}{\mathcal J}(E_t) \le -C\| H_{E_t}+4\gamma v_{E_t} -\overline{H_{E_t}+4\gamma v_{E_t}} \|_{L^2(\bd E_t)}^2.
\end{equation}
Note that the constant $C$, which depends on $E_t$ via the trace  estimates for $w_t$, is uniform in a fixed $C^2$-neighborhood of $E$.

Let $\delta^*>0$ be the constant given by Theorem~\ref{teo alex}, and let $\rho=\rho(\delta^*)>0$ be the constant given by Lemma~\ref{lem:transl-Ef} for $\varepsilon\le \delta^*$. 
By the bound~\eqref{eq:short-time-bound}, we have
\[
\|h(\cdot,t)\|_{C^0(\partial E)} \le \|h_0\|_{C^0(\partial E)} + t \sup_{0\le s  \le T} \|\partial_t h(\cdot,s)\|_{C^0(\partial E)} \le \delta + C t.
\]
Hence, up to taking $T$ and $\delta$ smaller, by interpolation, we  obtain $\|h(\cdot,t)\|_{W^{2,p}(\bd E)}\le \min \{\delta^*, \rho\}$ for every $t \in (0,T)$.
Lemma~\ref{lem:transl-Ef} then  implies that there exist  translations $\sigma_t \in \T^N$ and functions $f(\cdot,t) \in C^1(\bd E)$ such that $E_t+\sigma_t=E_{f(\cdot,t)}$. By ~\eqref{eq:energy-equivalence},~\eqref{smooth comput} and Theorem~\ref{teo alex} we deduce 
 \begin{align*}
    {\mathcal J}(E_{t})-{\mathcal J}(E) &\le C\|f(\cdot,t)\|^2_{H^1(\bd E)} \le C\| H_{E_t}+4\gamma v_{E_t} -\overline{H_{E_t}+4\gamma v_{E_t}} \|^2_{L^2(\bd E_t)}
    \\
    &\le -C \frac{\ud}{\ud t}({\mathcal J}(E_{t})-{\mathcal J}(E)).
\end{align*}
Then Gronwall's inequality implies
\[
 {\mathcal J}(E_t)-{\mathcal J}(E)\le  ( {\mathcal J}(E_0)-{\mathcal J}(E)) e^{-Ct}.
\]
Moreover, using the   bounds of~\eqref{eq:energy-equivalence}, we deduce 
\begin{equation}\label{eq:expo-decay-L1}
    \|f(\cdot,t)\|_{L^1(\bd E)}^2\le   C \delta^2 e^{-Ct}.
\end{equation}

\noindent\textbf{Step 2:}
We adapt the restarting argument  in~\cite[Theorem 0.1, Step 2]{DegDiaKubKub}.
By the uniform short-time existence result of Proposition~\ref{prop:local-E},  the functions $h(\cdot,t)$ are uniformly bounded in $C^k$  for every $t \in [T/2,T)$ and  $k \in \N$. Combining this bound with the
exponential decay~\eqref{eq:expo-decay-L1} and interpolation,
we also deduce that $\|f(\cdot,t)\|_{C^k(\partial E)}\le C_k\delta^{1/2}$  for every $k\in \N$ and $t\in (T/2,T),$ for some $C_k>0$. 

Up to taking $\delta$ smaller, we can thus  restart the flow at time $T/2$ from $E_{f(\cdot,T/2)}$ and extend it up to the time $3T/2$.  By translation invariance and uniqueness of strong solutions,  we have extended the original  flow $E_t$ up to the time $3T/2.$
We can then iterate this procedure to deduce the global existence of the flow.

Applying the same interpolation argument with arbitrary $k$, we obtain  the exponential convergence  in $C^k$ to $E$ of the translated sets $E_t +\sigma_t$.

\noindent\textbf{Step 3:} 
We consider the translations $\sigma_t$ defined by Lemma~\ref{lem:transl-Ef}. By compactness we can find $\tau\in \T^N$ and a sequence $t_n\to+\infty$ such that $\sigma_{t_n}\to \tau$ as $n\to+\infty$. In particular, it holds $E_{t_n} \to E-\tau$ in $C^k$, for every $k \in \N$. For $F,$ $G \subset \T^N$, consider the dissipation 
\[
\mathcal D(F,G):=\int_{F\triangle G} \text{dist}_{\partial G}(x) \ud x = \int_F \sd_G \ud x - \int_G\sd_G \ud x,
\]
where $\sd_G=\text{dist}_{G}-\text{dist}_{G^c}$ denotes the signed distance function to a set $G$. 
Following the computations in Step 3 of the proof of~\cite[Theorem 3.4]{AFJM},
we deduce 
\begin{equation}
    \frac{\ud}{\ud t}\mathcal{D}(E_t,E-\tau) = -\int_{\T^N} \nabla \omega\cdot \nabla w_t \ud x,
\end{equation}
where $\omega$ denotes the harmonic extension of $\text{sd}_{E-\tau}$ to $\T^N\setminus \bd E_t$. Note that, by elliptic estimates, $\|\nabla \omega\|_{L^2(\T^N)}\le C\| \text{sd}_{ E-\tau} \|_{C^1(\bd E_t) }   \le C$. The constant $C$ is uniform along the flow by the uniform $C^2$-bounds on the evolving sets $E_t$ previously obtained.  
By the exponential decay~\eqref{eq:expo-decay-L1} and elliptic estimates  we deduce $\|\nabla w_t\|_{L^2}\le Ce^{-Ct}$, in particular
\[
  \Big |\frac{\ud}{\ud t}\mathcal{D}(E_t,E-\tau)\Big | \le Ce^{-Ct}.
\]
Hence, $\mathcal{D}(E_t,E-\tau)$ admits a limit as $t\to +\infty$. Since $E_{t_n} \to E-\tau$ we deduce that $\mathcal{D}(E_t,E-\tau)\to 0$ as $t\to +\infty$ and that the whole flow $E_t+\tau$ converges  towards $E$.  
This concludes the proof as the exponential convergence follows from Step 2.
\end{proof}

\subsection*{Acknowledgements}
 The authors  thank Vesa Julin and  Massimiliano Morini for helpful discussions and suggestions.
D. De Gennaro was partially funded by the European Union: the European Research
Council (ERC), through StG “ANGEVA”, project number: 101076411. Views and opinions expressed
are however those of the authors only and do not necessarily reflect those of the European Union or
the European Research Council. Neither the European Union nor the granting authority can be held
responsible for them.
D. De Gennaro was   supported by the Italian Ministry of University
and Research (MUR) through the FIS 2 project SiGmA: “Singularities in Geometric
Analysis: Minimal Surfaces and Mean Curvature Flows”, project code FIS-2023-02962
(CUP G53C25000120001).
A. Kubin research has been supported by the Austrian Science Fund (FWF) through grants 10.55776/F65, 10.55776/P35359, 10.55776/Y1292. 
Part of this contribution was completed while D. De Gennaro was visiting A. Kubin at the Technische Universit\"at Wien.

\section*{AI usage statement}
All the proofs contained in this manuscript were written by the authors, who take full responsibility for the correctness of the statements. This article does not contain any mathematical content generated by AI. AI tools were used solely for language editing, in particular to improve the fluency and clarity of the exposition, without contributing to the mathematical content.

\bibliographystyle{abbrv}
\bibliography{bibliography}

 \end{document}